\documentclass[11pt]{amsart}

\usepackage{amsmath,amsthm,amsfonts,amssymb,latexsym,mathrsfs,color,cite,tikz,cases,url,enumerate}
\usepackage{color}
\usepackage[pagebackref]{hyperref}
\usepackage[capitalize]{cleveref}
\usepackage{enumitem}
\usepackage{verbatim}

\setlength{\textwidth}{15cm}
\setlength{\topmargin}{0cm}
\setlength{\oddsidemargin}{.5cm}
\setlength{\evensidemargin}{.5cm}
\setlength{\textheight}{21.5cm}
\allowdisplaybreaks

 
  \makeatletter
  \newcommand{\numberlike}[2]{%
     \expandafter\def\csname c@#1\endcsname{%
         \expandafter\csname c@#2\endcsname}%
  }
  \makeatother


  \def\DefaultNumberTheoremWithin{section}

  \theoremstyle{plain}
  \newtheorem{lemma}{Lemma}
     \numberwithin{lemma}{\DefaultNumberTheoremWithin}
     \labelformat{lemma}{Lemma~#1}
  \newtheorem{theorem}{Theorem}
     \numberwithin{theorem}{\DefaultNumberTheoremWithin}
     \numberlike{theorem}{lemma}
     \labelformat{theorem}{Theorem~#1}
  \newtheorem*{maintheorem*}{Corollary of Main Theorem}
  \newtheorem{corollary}{Corollary}
     \numberwithin{corollary}{\DefaultNumberTheoremWithin}
     \numberlike{corollary}{lemma}
     \labelformat{corollary}{Corollary~#1}
  \newtheorem{proposition}{Proposition}
     \numberwithin{proposition}{\DefaultNumberTheoremWithin}
     \numberlike{proposition}{lemma}
     \labelformat{proposition}{Proposition~#1}
  \newtheorem{conjecture}{Conjecture}
     \numberwithin{conjecture}{\DefaultNumberTheoremWithin}
     \numberlike{conjecture}{lemma}
     \labelformat{conjecture}{Conjecture~#1}

  \theoremstyle{definition}
  
     \numberwithin{definition}{\DefaultNumberTheoremWithin}
     \numberlike{definition}{lemma}
     \labelformat{definition}{Definition~#1}
  \newtheorem{question}{Question}
     \numberwithin{question}{\DefaultNumberTheoremWithin}
     \numberlike{question}{lemma}
     \labelformat{question}{Question~#1}
  
     \numberwithin{problem}{\DefaultNumberTheoremWithin}
     \numberlike{problem}{lemma}
     \labelformat{problem}{Problem~#1}
    
  \theoremstyle{remark}
  
     \numberwithin{remark}{\DefaultNumberTheoremWithin}
     \numberlike{remark}{lemma}
     \labelformat{remark}{Remark~#1}
  \newtheorem{example}{Example}
     \numberwithin{example}{\DefaultNumberTheoremWithin}
     \numberlike{example}{lemma}
     \labelformat{example}{Example~#1}
     
 \newtheorem{claim}{Claim}
     \numberwithin{claim}{\DefaultNumberTheoremWithin}
     \numberlike{claim}{lemma}
     \labelformat{claim}{Claim~#1}
\labelformat{figure}{Figure~#1}
\labelformat{chapter}{Chapter~#1}
\labelformat{appendix}{Appendix~#1}
\labelformat{section}{Section~#1}
\labelformat{subsection}{Subsection~#1}


\def\RR{\mathbb{R}}
\def\NN{\mathbb{N}}

\def\F{\mathcal{F}}

\newcommand\cgreen[1]{\color{magenta} #1 \color{black}}
\def\des{\mathrm{des}}

\title{Total positivity and two inequalities by
  Athanasiadis and Tzanaki
}

\author{Lili Mu} 
\address{School of Mathematics and Statistics, Jiangsu Normal University, Xuzhou 221116, PR China} 
\email{lilimu@jsnu.edu.cn}

\author{Volkmar Welker}
\address{Philipps-Universit\"at Marburg, Fachbereich Mathematik und Informatik, 35032 Marburg, Germany}
\email{welker@mathematik.uni-marburg.de}
\begin{document}

\begin{abstract} 
Let $\Delta$ be a $(d-1)$-dimensional simplicial complex and
$h^ \Delta = (h_0^ \Delta ,\ldots, h_d^ \Delta)$ its $h$-vector.
For a face uniform subdivision operation $\F$ we write $\Delta_\F$ for
the subdivided complex and $H_\F$ for the matrix such that
$h^ {\Delta_\F} = H_\F h^ \Delta$.

In connection with the real rootedness of symmetric decompositions
Athanasiadis and Tzanaki studied for strictly positive $h$-vectors the inequalities $\frac{h_0^ \Delta}{h_1^ \Delta} \leq \frac{h_1^\Delta}{h_{d-1}^ \Delta} \leq \cdots \leq 
\frac{h_d^ \Delta}{h_0^\Delta}$ and $\frac{h_1^\Delta}{h_{d-1}^\Delta} \geq \cdots
    \geq \frac{h_{d-2}^\Delta}{h_2^\Delta} \geq \frac{h_{d-1}^\Delta}{h_1^\Delta}$. 
In this paper we show that if the inequalities holds for a simplicial complex $\Delta$ and $H_\F$ is TP$_2$ (all entries and two minors are non-negative) then 
the inequalities hold for $\Delta_\F$. 

We prove that if $\F$ is the barycentric subdivision then $H_\F$ is
TP$_2$. If $\F$ is the $r$\textsuperscript{th}-edgewise subdivision then
work of Diaconis and Fulman shows $H_\F$ is TP$_2$. Indeed in this case
by work of Mao and Wang $H_\F$ is even TP.
\end{abstract}

\maketitle
\vspace{-0.5in}

\section{Introduction}

We study when a subdivision operation of simplicial complexes preserves a set of inequalities on the $h$-vector of the complex, first considered by Athanasiadis and Tzanaki in \cite{AT21} in the context of 
real rooted symmetric decompositions of $h$-polynomials. 

For a (finite) simplicial complex $\Delta$ of dimension
$d-1$ the $h$-vector $h^\Delta = (h_0^\Delta,\ldots, h_d^\Delta)$
is an encoding of the face numbers
of the simplicial complex (see for example \cite{BB} for definitions and
background). Athanasiadis and Tzanaki \cite{AT21}
study the following two conditions and inequalities

\begin{align}
\label{eq:athtza}
h_0^ \Delta, \ldots,h_d^ \Delta > 0 \text{ and }
    \frac{h_0^\Delta}{h_d^\Delta} \leq & \frac{h_1^\Delta}{h_{d-1}^\Delta} \leq \cdots
    \leq \frac{h_{d-1}^\Delta}{h_1^\Delta} \leq \frac{h_d^\Delta}{h_0^\Delta},
\end{align}

\begin{align}
\label{eq:athtza2}
  h_1^ \Delta, \ldots,h_{d-1}^ \Delta > 0\,,\, h_d^ \Delta=0 \text{ and } \frac{h_1^\Delta}{h_{d-1}^\Delta} \geq \cdots
    \geq \frac{h_{d-2}^\Delta}{h_2^\Delta} \geq \frac{h_{d-1}^\Delta}{h_1^\Delta}.
\end{align}

By $h_0^\Delta = 1$ and the Dehn-Summerville equations 
$h_i^\Delta = h_{d-i}^\Delta$ it follows that \eqref{eq:athtza} and \eqref{eq:athtza2} 
(except for $h_d^\Delta = 0$) hold
with equality for any Gorenstein\textsuperscript{*} complex.
In \cite[Question 7.2]{AT21} the authors ask if \eqref{eq:athtza} 
holds for all $2$-Cohen-Macaulay simplicial complexes and in
\cite[Question 7.3]{AT21} they ask which triangulations of balls satisfy 
\eqref{eq:athtza2}. 
Since $2$-Cohen-Macaulayness  
is a topological property a positive answer to \cite[Question 7.2]{AT21}
would imply that the inequalities should be preserved under
any subdivision of a $2$-Cohen-Macaulay complexes. We do not know how to
leverage the fact that the complex is $2$-Cohen-Macaulay when trying to 
show that \eqref{eq:athtza} is preserved under subdivisions. Instead we show
in \ref{pro:preserve} that if we subdivide by a face uniform subdivision 
$\F$ (see \cite{Atha22}) then the
preservation of \eqref{eq:athtza} or \eqref{eq:athtza2} follows from the TP$_2$ property of the
$h$-vector transformation matrix $H_\F$ of the subdivision. Recall that a real matrix is totally positive or TP if all its minors are non-negative, it 
has the
TP$_r$ property if all its $(k \times k)$-minors 
are non-negative for $k=1,\ldots, r$. 

TP-theory arises from unimodality and log-concavity questions in a quite 
natural way. Since Brenti \cite{Bren89} applied
this theory to prove and generate unimodal, log-concave sequences, this theory 
has proven to be a very useful tool in combinatorics and has been applied
frequently and in a wide range of contexts 
(see e.g., \cite{MMW22,Muk24,PSZ23,Skan01,Zhu23}).

We show for two prominent face uniform subdivisions that they satisfy the assumption of \ref{pro:preserve}.  

In \ref{tp} we prove that $H_\F$ is TP$_2$ if $\F$ is the barycentric
subdivision. The latter is defined via a refined descent statistics on the symmetric group.
As a byproduct of our studies we obtain in \ref{thm:inverse} 
additional structural insight into the combinatorics of $H_\F^ {-1}$ in this case. 

We recall results by Diaconis \& Fulman \cite{DF09} and Mao \& Wang \cite{MW22} which imply the $H_\F$ is TP$_2$ \cite{DF09} and even TP \cite{MW22} if $\F$ is the $r$\textsuperscript{th}-edgewise subdivision. 

In particular, \eqref{eq:athtza} or \eqref{eq:athtza2} is preserved by barycentric (see \ref{cor:bary}) 
and $r$\textsuperscript{th}-edgewise subdivision (see \ref{cor:edge}).

We also provide examples which show that there are face uniform 
subdivisions $\F$ for which $H_\F$ is not TP$_2$. Nevertheless,
we were not able to produce an example which shows that such a 
subdivision can destroy the validity of \eqref{eq:athtza}.

\section{Triangulations, \texorpdfstring{$f$}~- and \texorpdfstring{$h$}~-vectors}

For a finite set $\Omega$ a simplicial complex over ground set $\Omega$ is
a set of subsets of $\Omega$ such that $\sigma \subseteq \tau \in \Delta$
implies $\sigma \in \Delta$. We call a $\sigma \in \Delta$ a face of $\Delta$
and the dimension of a face $\sigma$ is $\dim(\sigma) = \# \sigma -1$. 
The dimension of $\Delta$ is $\max_{\sigma \in \Delta} \dim(\sigma)$. 
If $\Delta$ is a $(d-1)$-dimensional simplicial complex then the
vector $f^\Delta = (f_{-1}^ \Delta,\ldots, f_{d-1}^ \Delta)$ where
$f_i^ \Delta = \# \big\{\, \sigma \in \Delta~\big|~\dim(\sigma) = i\,\big\}$
is called the $f$-vector of $\Delta$
and $f^ \Delta(x) = \sum_{i=0}^ d f_{i-1}^ {\Delta} x^ {d-i}$ is called the $f$-polynomial of
$\Delta$. Expanding $f^ \Delta(x-1) = h^ \Delta(x) = \sum_{i=0}^ d h_i^ \Delta x^ {d-i}$ yields the $h$-polynomial
with coefficient sequence $h^\Delta = (h_0^ \Delta,\ldots, h_d^ \Delta)$ the $h$-vector of $\Delta$.
To each simplicial complex $\Delta$ there is a geometric realization $|\Delta|$ in some
real vectorspace in which
each face $\sigma$ of $\Delta$ is represented by a geometric simplex $|\sigma|$ of dimension $\dim(\sigma)$
such that $|\sigma| \cap |\tau| = |\sigma \cap \tau|$ for all $\sigma,\tau \in \Delta$. 
A simplicial complex $\Delta'$ is called a face uniform triangulation or subdivision of 
$\Delta$ if there are geometric realizations $|\Delta| = |\Delta'|$ such that
\begin{itemize}
  \item each $|\sigma|$ for $\sigma \in \Delta$ is a union of $|\sigma'|$ for 
$\sigma' \in \Delta'$ and 
\item there are numbers $f_{ij}$, $0 \leq i \leq j \leq \dim (\Delta)$ such that
for any $\sigma \in \Delta$ we have $f_{ij} = \# \{ \tau \in \Delta'~:~ |\tau| \subseteq |\sigma|, \dim (\tau) = i\}$.
\end{itemize}

We write $\F$ for the triangular array $(f_{ij})_{0 \leq i,j}$. Since we are only interested in the 
enumerative aspects of the triangulation we 
write $\Delta_\F$ for $\Delta'$ in this case and speak of $\F$ as a face uniform triangulation (in dimension $d-1$).

The following summarizes the results from \cite{Atha22},
which we will use in this paper.

\begin{proposition}[Theorem 1.1, Proposition 4.6 \cite{Atha22}]
    Let $\F$ be a face uniform triangulation in
    dimension $d-1$. 
    Then there is a matrix $H_\F = (h_{ij})_{0 \leq i,j \leq d}$ such that for any simplicial complex
    $\Delta$ of dimension $d-1$ we have 

    $$h^{\Delta_\F} = H_\F \, h^\Delta.$$
    Moreover, we have $h_{ij} = h_{d-i,d-j}$
    for $0 \leq i,j \leq d$. 
\end{proposition}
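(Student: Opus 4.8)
The plan is to prove the two assertions separately. \emph{Existence.} It is enough to show that $f^{\Delta_\F}$ is a linear function of $f^\Delta$ whose coefficients depend only on $\F$ and $d$. Every face $\tau$ of $\Delta_\F$ has $\mathrm{relint}\,|\tau|$ contained in the relative interior of a unique face $\sigma\in\Delta$, its carrier. For $\sigma\in\Delta$ of dimension $j$ let $a_{ij}$ denote the number of $i$-dimensional faces of $\Delta_\F$ with carrier $\sigma$. Because the restriction of $\Delta_\F$ to any face of $\sigma$ is again governed by the array $(f_{ij})$, summing over the faces of $\sigma$ gives $f_{ij}=\sum_{\rho\subseteq\sigma}a_{i,\dim\rho}$, a triangular system which by M\"obius inversion over the face poset of a simplex determines each $a_{ij}$ uniquely from $(f_{ij})$. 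Grouping the faces of $\Delta_\F$ by their carriers yields $f_i^{\Delta_\F}=\sum_j a_{ij}\,f_j^\Delta$, and rewriting this in terms of $h$-vectors through the (dimension-dependent) invertible change of basis $f^\Delta\leftrightarrow h^\Delta$ produces the matrix $H_\F=(h_{ij})$ with $h^{\Delta_\F}=H_\F h^\Delta$. The same argument applies verbatim to any simplicial complex of dimension $\le d-1$, so the identity holds in that generality.

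\emph{Symmetry.} Let $J$ be the reversal on $\RR^{d+1}$, $(Jv)_i=v_{d-i}$, so that on $h$-polynomials $J$ carries $p(x)$ to $x^d p(1/x)$; the claim $h_{ij}=h_{d-i,d-j}$ is equivalent to $JH_\F=H_\F J$. I would test $H_\F$ on the $h$-vectors of triangulated balls. Two ingredients enter. First, the construction of $H_\F$ in dimension $d-1$ and that of the analogous matrix $H_\F^{-}$ in dimension $d-2$ use the same array $(f_{ij})$, the latter merely truncated, whence $H_\F\,\mu=\mu\,H_\F^{-}$, where $\mu$ is multiplication of the $h$-polynomial by $x-1$ --- equivalently, the passage of a complex of dimension $\le d-2$ to ambient dimension $d-1$. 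Second, a subdivision preserves the underlying space: if $B$ is a triangulated $(d-1)$-ball then $B_\F$ is one with $\partial(B_\F)=(\partial B)_\F$, and a subdivided sphere is a sphere. Applying the classical identity $h^B(x)=(x-1)\,h^{\partial B}(x)+x^d h^B(1/x)$ for a triangulated $(d-1)$-ball to both $B$ and $B_\F$, and using $h^{(\partial B)_\F}=H_\F^{-}h^{\partial B}$ together with $H_\F\mu=\mu H_\F^{-}$, one gets $(\mathrm{id}-J)H_\F h^B=H_\F(\mathrm{id}-J)h^B$ for every triangulated $(d-1)$-ball $B$. Now the $h$-vectors of such balls span $\{v\in\RR^{d+1}:v_d=0\}$ --- for instance the $(d-1)$-simplex and its stellar subdivisions at faces of dimensions $1,\dots,d-1$ realize the linearly independent $h$-polynomials $\sum_{t=0}^{m}x^{d-t}$, $m=0,\dots,d-1$ --- and adjoining the (symmetric) $h$-vector of the boundary of the $d$-simplex, on which $\mathrm{id}-J$ annihilates both sides, produces a basis of $\RR^{d+1}$. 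Hence $(\mathrm{id}-J)H_\F=H_\F(\mathrm{id}-J)$, i.e.\ $JH_\F=H_\F J$.

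The existence part is routine bookkeeping with carriers and M\"obius inversion, and I expect no trouble there; the substance is the symmetry. Its most delicate point is the manipulation fusing the two instances of the ball identity: one must check that the factor $x-1$ and the lower-dimensional matrix $H_\F^{-}$ combine so that the right-hand side comes out exactly as $H_\F(\mathrm{id}-J)h^B$ rather than merely proportional to it --- this is precisely the intertwining $H_\F\mu=\mu H_\F^{-}$, which rests on the compatibility of the $\F$-subdivision across dimensions. A variant of the argument, nearer to the local $h$-vector formalism behind \cite{Atha22}, would start instead from Stanley's subdivision formula $h^{\Delta_\F}(x)=\sum_{\sigma\in\Delta}\ell_\sigma(x)\,h^{\mathrm{lk}_\Delta\sigma}(x)$: face uniformity forces $\ell_\sigma$ to depend only on $\dim\sigma$, and then, expanding both $h^{\Delta_\F}(x)$ and $x^d h^{\Delta_\F}(1/x)$ in the basis $\{(x-1)^{d-m}\}_{m=0}^{d}$ and invoking Stanley's theorem that every local $h$-polynomial is palindromic about half its degree, the equality $JH_\F=H_\F J$ drops out of a short binomial identity.
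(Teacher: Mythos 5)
Your existence argument is fine: the carrier count $a_{ij}$ obtained by M\"obius inversion over the face poset of a simplex, the relation $f_i^{\Delta_\F}=\sum_j a_{ij}f_j^{\Delta}$, and conjugation by the fixed-dimension $f\leftrightarrow h$ change of basis give $H_\F$, and your observation that this works verbatim for complexes of dimension $\le d-1$ (so that $H_\F$ intertwines with multiplication by $x-1$, i.e.\ $H_\F\mu=\mu H_\F^{-}$) is correct and is exactly what your symmetry argument later needs. Note that the paper under review does not prove this proposition at all; it is quoted from \cite{Atha22}, so the only comparison available is with Athanasiadis's proof there.

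The symmetry half, however, has a genuine gap. You verify $JH_\F=H_\F J$ by evaluating both sides on the $h$-vectors of specific complexes --- the stellar subdivisions of the $(d-1)$-simplex and the boundary of the $d$-simplex --- and in doing so you read $H_\F h^{B}$ and $H_\F h^{S}$ as $h$-vectors of actual subdivided complexes $B_\F$, $S_\F$, which is what licenses the McMullen--Walkup ball identity, the equality $\partial(B_\F)=(\partial B)_\F$, and Dehn--Sommerville for $S_\F$. But $\F$ is given only as the face-count array of \emph{some} face-uniform triangulation of \emph{some} $(d-1)$-complex; nothing in the hypothesis guarantees that your test complexes admit an $\F$-uniform triangulation. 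Face uniformity prescribes the numbers $f_{ij}$ only, not the induced subdivisions of shared faces: restricting the given triangulation to a facet yields an $\F$-triangulation of the $(d-1)$-simplex, but its restrictions to different $(d-2)$-faces need not agree (or even be isomorphic), so copies of it cannot in general be pasted to produce $B_\F$ or $S_\F$. The only complex you are guaranteed to be able to test on is the simplex itself, whose $h$-vector spans far too little of $\RR^{d+1}$. Thus your argument proves the symmetry for subdivisions realizable on every complex (barycentric, edgewise, and the other examples in this paper), but not the proposition in the stated generality. The proof in \cite{Atha22} is local and avoids this: the entries of $H_\F$ are expressed through the restriction of the triangulation to a single $j$-simplex, which is a triangulated ball, and the relation $h_{ij}=h_{d-i,d-j}$ comes from interior/boundary reciprocity for that ball --- much closer to the local-$h$ variant you sketch in your final sentences, which is the route you would need to complete to repair the argument.
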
    

Next we formulate and prove the result which allows us to approach the preservation of \eqref{eq:athtza} or \eqref{eq:athtza2} under face uniform
triangulations.

\begin{proposition}
\label{pro:preserve}
    Let $\F$ be a face uniform subdivision such that
    $H_\F$ is TP$_2$. 
    
    Then for every $(d-1)$-dimensional simplicial complex 
    $\Delta$ 
    \begin{itemize}
        \item[(i)] 
    satisfying \eqref{eq:athtza}  
    we have that
    $\Delta_\F$ satisfies \eqref{eq:athtza},
    \item[(ii)] satisfying \eqref{eq:athtza2} 
    we have that
    $\Delta_\F$ satisfies \eqref{eq:athtza2}.
    \end{itemize}
\end{proposition}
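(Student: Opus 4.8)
The plan is to reformulate the chain of inequalities in \eqref{eq:athtza} (respectively \eqref{eq:athtza2}) as a statement about $2\times 2$ minors of a suitable $2 \times (d+1)$ matrix built from the $h$-vector, and then observe that applying $H_\F$ to $h^\Delta$ corresponds to right-multiplying that matrix by $H_\F^T$, under which $2\times 2$ minors transform in a controlled way via the Cauchy--Binet formula. Concretely, I would first record the elementary observation that for positive reals the inequality $\frac{h_i}{h_{d-i}} \leq \frac{h_{i+1}}{h_{d-i-1}}$ is equivalent to $h_i h_{d-i-1} \leq h_{i+1} h_{d-i}$, i.e. to the nonnegativity of the $2\times 2$ minor $\det\begin{pmatrix} h_{i+1} & h_i \\ h_{d-i} & h_{d-i-1}\end{pmatrix}$. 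So \eqref{eq:athtza} says exactly that all the "consecutive" minors of the $2\times(d+1)$ matrix
\[
M(h) \;=\; \begin{pmatrix} h_d & h_{d-1} & \cdots & h_1 & h_0 \\ h_0 & h_1 & \cdots & h_{d-1} & h_d \end{pmatrix}
\]
(taken on column pairs $(j,j+1)$) are nonnegative; note the second row of $M(h)$ is $(h_0,\dots,h_d)$ and the first row is its reversal. The positivity hypothesis $h_i>0$ must be tracked separately and is immediate since $H_\F$ has nonnegative entries and at least one positive entry in each relevant row (indeed $H_\F$ maps the all-positive $h$-vector of a simplex-like complex to an all-positive vector); I would isolate this as a short preliminary step.

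**The core computation.**

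The key point is that, because of the symmetry $h_{ij}=h_{d-i,d-j}$ from the preceding Proposition, the reversal operation intertwines nicely with $H_\F$: if $J$ denotes the $(d+1)\times(d+1)$ anti-diagonal permutation matrix, then $J H_\F J = H_\F$, so $H_\F$ commutes with reversal. Consequently $M(h^{\Delta_\F}) = M(H_\F h^\Delta)$ has its second row equal to $H_\F h^\Delta$ and its first row equal to $J H_\F h^\Delta = H_\F (J h^\Delta)$, which means $M(h^{\Delta_\F}) = M(h^\Delta)\, H_\F^T$. Now I apply Cauchy--Binet: any $2\times 2$ minor of $M(h^\Delta) H_\F^T$ on a pair of columns is a sum over pairs of columns $(a,b)$ of (minor of $M(h^\Delta)$ on columns $a,b$) times (minor of $H_\F^T$ on the corresponding rows and our chosen columns). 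Since $H_\F$ is TP$_2$, every $2\times 2$ minor of $H_\F^T$ is nonnegative; and the relevant minors of $M(h^\Delta)$ are nonnegative by hypothesis \eqref{eq:athtza} — but here I must be careful, because \eqref{eq:athtza} only asserts nonnegativity of the \emph{consecutive} minors of $M(h^\Delta)$, not all of them.

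**Closing the gap.**

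This is the step I expect to be the main obstacle: upgrading "consecutive $2\times2$ minors nonnegative" to "all $2\times2$ minors nonnegative" for the matrix $M(h^\Delta)$. Fortunately this is a standard fact about matrices of the form $\binom{\text{reversed sequence}}{\text{sequence}}$ with a positive log-concave-type chain: the inequalities in \eqref{eq:athtza} assert that the ratios $h_j/h_{d-j}$ are weakly increasing in $j$, and since $h_a/h_{d-a} \le h_b/h_{d-b}$ for all $a<b$ follows by transitivity from the consecutive ones, we get $h_a h_{d-b} \le h_b h_{d-a}$, i.e. all column-pair minors of $M(h^\Delta)$ are nonnegative. So once the positivity is in hand, \emph{all} $2\times2$ minors of $M(h^\Delta)$ are nonnegative, Cauchy--Binet applies cleanly, and every consecutive minor of $M(h^{\Delta_\F})$ is a nonnegative combination of nonnegative terms, hence nonnegative — which is exactly \eqref{eq:athtza} for $\Delta_\F$. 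For part (ii), the argument is essentially identical: \eqref{eq:athtza2} asserts the ratios $h_j/h_{d-j}$ are weakly \emph{decreasing} for $1\le j\le d-1$ with $h_d=0$, so one works with the $2\times(d-1)$ submatrix of $M(h)$ indexed by columns $1,\dots,d-1$ (dropping the first and last), notes that $h_d^{\Delta_\F}=0$ because $h_d=0$ forces the last coordinate of $H_\F h^\Delta$ to vanish by the symmetry $h_{d,j}=h_{0,d-j}$ together with $h_{0,j}=\delta_{0j}$, and runs the same Cauchy--Binet / transitivity argument with inequalities reversed.
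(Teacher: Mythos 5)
Your proposal is correct and takes essentially the same route as the paper: your $2\times(d+1)$ matrix $M(h)$ is just the transpose of the paper's $(d+1)\times 2$ matrix $A$, the symmetry $h_{ij}=h_{d-i,d-j}$ is used in the same way (you phrase it as $JH_\F J=H_\F$, the paper uses it to identify the entries of $H_\F A$ with $h^{\Delta_\F}$ and its reversal), and the Cauchy--Binet/TP$_2$ product step together with the upgrade from consecutive to all $2\times 2$ minors (your transitivity of the ratio chain, the paper's lemma on consecutive minors for matrices with positive interior entries) is the same argument in substance. The only difference is that you make explicit the bookkeeping of strict positivity of $h^{\Delta_\F}$ and of $h_d^{\Delta_\F}=0$ in case (ii), which the paper leaves implicit.
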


 \begin{proof}
    First consider (i) and assume $\Delta$ satisfies \eqref{eq:athtza}.
    Consider the matrix $A \in \RR^{(d+1) \times 2}$ with
    first column vector $(h_d^ \Delta,\ldots, h_0^ \Delta)^t$  and
    second column vector $(h_0^ \Delta,\ldots, h_d^ \Delta)^t$. 
    From \eqref{eq:athtza} and the positivity of the $h$-vector we deduce that 
    the $(2 \times 2)$-minors 
    $h_{i+1}^ \Delta \cdot h_{d-i}^ \Delta-h_i^ \Delta \cdot h_{d-i-1}^\Delta $ of consecutive 
    rows from $A$ are non-negative. It follows by \ref{lem:consecutive} that 
    $A$ is TP$_2$. Using \ref{lem；product}, we deduce from $H_\F$ being TP$_2$ and
    $A$ being TP$_2$ that $H_\F \cdot A$ is
    TP$_2$.
    Set $H_\F \cdot A = \big(\,b_{ij}\,\big)_{\genfrac{}{}{0pt}{}{i=0,\ldots,d}{j=1,2}}$.
    Then 
    \begin{align*}
        b_{i1} & = h_{i0} h_d^ \Delta + \cdots + h_{id}h_0^ \Delta \\
               & = h_{d-i,d} h_d^ \Delta + \cdots + h_{d-i,0} h_0^ \Delta \\
               & = h_{d-i}^{\Delta_\F}.
    \end{align*}
    and
    \begin{align*}
        b_{i2} & = h_{i0} h_0^ \Delta + \cdots + h_{id}h_d^ \Delta =
    h_i^{\Delta_\F}.
    \end{align*}
    Since $H_\F \cdot A$ is TP$_2$ it follows that
    $$b_{i1}\cdot b_{i+1,2} -b_{i2} \cdot b_{i+1,1}
    = h^{\Delta_\F}_{i+1} \cdot h^{\Delta_\F}_{d-i} 
    -h^{\Delta_\F}_i \cdot h^{\Delta_\F}_{d-i-1}
    \geq 0.$$
    This then implies \eqref{eq:athtza} for $\Delta_\F$.   

    For (ii) we use a very similar argumentation. Assume $\Delta$ satisfies \eqref{eq:athtza2}. Consider the matrix $A \in \RR^{(d+1) \times 2}$ with
    first column vector $(h_0^ \Delta,\ldots, h_d^ \Delta)^t$  and
    second column vector $(h_d^ \Delta,\ldots, h_0^ \Delta)^t$.
    From \eqref{eq:athtza2} and the positivity of the $h$-vector we deduce that 
    the $(2 \times 2)$-minors 
    $h_{i}^ \Delta \cdot h_{d-i-1}^ \Delta-h_{i+1}^ \Delta \cdot h_{d-i}^\Delta $ of consecutive 
    rows from $A$ are non-negative for $i=1,\ldots, d-2$.
    For $i=0$ the minor is $h_{0}^ \Delta \cdot h_{d-1}^ \Delta-h_1^ \Delta \cdot h_{d}^\Delta = h_{d-1}^ \Delta > 0$ and
    for $i=d-1$ the minor is
    $h_{d-1}^ \Delta \cdot h_{0}^ \Delta-h_{d}^ \Delta \cdot h_{1}^\Delta 
    = h_{d-1}^ \Delta > 0$. Using \ref{lem:consecutive}, it follows that 
    $A$ is TP$_2$. By \ref{lem；product}, we deduce from $H_\F$ being TP$_2$ and
    $A$ being TP$_2$ that $H_\F \cdot A$ is
    TP$_2$. The rest of the argument is analogous to case (i)
    taking into account that the roles of the two columns of $A$ are
    reversed.
 \end{proof}

We will apply this result to the case when $\F$ is the barycentric and
the case when $\F$ is the $r$\textsuperscript{th}-edgewise subdivision.
For the definition of barycentric and $r$\textsuperscript{th} edgewise
subdivision we refer the reader to \cite{MWe17}.

Our main contribution is the following result on barycentric subdivision
which is proved in \ref{sec:bary}.

\begin{theorem}\label{tp}
  Let $\F$ be the barycentric subdivision. Then 
  $H_{\F}$ is TP$_2$.
\end{theorem}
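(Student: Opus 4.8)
The plan is to identify the matrix $H_{\F}$ explicitly for the barycentric subdivision and then prove the TP$_2$ property by a combinatorial argument on the refined descent statistics that define its entries.

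The plan is to make the matrix $H_{\F}$ for the barycentric subdivision completely explicit and then attack its $2\times 2$ minors directly. First I would record the combinatorial description of the entries $h_{ij}$ going back to Brenti and Welker: $h_{ij}$ is the number of permutations $w$ of $[d]$ with $\des(w)=i$ and with a prescribed refined descent statistic equal to $j$; in particular the column $j=0$ is exactly the Eulerian triangle (the entries $A(d,i)$), the column $j=d$ is its row-reversal, and the intermediate columns interpolate between them. From this description two facts are immediate and will be used throughout: every entry $h_{ij}$ is a nonnegative integer, and the symmetry $h_{ij}=h_{d-i,d-j}$ (recorded in the first proposition above, from \cite{Atha22}) lets me restrict attention to, say, $j\le\lfloor d/2\rfloor$.

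Second, I would reduce the claim to the contiguous case. By \ref{lem:consecutive} it suffices to prove that every $2\times 2$ minor of $H_{\F}$ on consecutive rows $i,i+1$ and consecutive columns $j,j+1$ is nonnegative, i.e.
$$h_{ij}\,h_{i+1,j+1}\ \ge\ h_{i,j+1}\,h_{i+1,j}\qquad (0\le i,j\le d-1).$$
Thus the entire theorem comes down to this single family of quadratic inequalities for refined Eulerian numbers, and the symmetry above halves the range of $(i,j)$ that has to be checked.

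For the inequalities themselves I see two routes. The first is an induction that peels off one column at a time: I would look for a three-term recurrence expressing column $j+1$ of $H_{\F}$ in terms of columns $j$ and $j-1$ via multiplication by a fixed nonnegative tridiagonal matrix, this being the enumerative effect of adjoining one more element to the top face of a chain and redistributing descents. Together with the base case — column $0$ is the Eulerian triangle, whose $2\times 2$ minors are classically nonnegative — and the closure of the TP$_2$ property under matrix products (already invoked in the proof of \ref{pro:preserve}), this yields TP$_2$ for the whole array; such a factorization is moreover exactly the kind of structure that underlies the description of $H_{\F}^{-1}$ announced in \ref{thm:inverse}. The second, more hands-on route is a direct injection: given a pair of permutations $(u,v)$ realizing the counts $h_{i,j+1}$ and $h_{i+1,j}$, I would splice their suffixes at the first position where the descent (and refined-statistic) "budget" of one overtakes the other, producing a pair realizing $h_{ij}$ and $h_{i+1,j+1}$, and then verify injectivity by writing down the inverse splice.

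The hard part will be the combinatorial heart of the third step: pinning down the precise refined descent statistic so that it simultaneously matches the $h$-vector transformation and behaves cleanly under either the column recurrence or the suffix exchange, and then controlling exactly what happens to $\des$ and to the refined statistic at the splice point (or at the boundary rows of the recurrence, where the Eulerian edge effect $A(d,d)=0$ intervenes). The delicate bookkeeping is that the exchange must neither create nor destroy a descent in an uncontrolled way and must be reversible. Everything else — nonnegativity of entries, the symmetry reduction, and the passage from contiguous $2\times 2$ minors to the full TP$_2$ property — is routine given the results already in the paper.
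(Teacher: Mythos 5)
Your framing coincides with the paper's: identify $H_{\F}$ with the refined Eulerian array $A(d,i,j)$ counting permutations by number of descents and last entry, and use \ref{lem:consecutive} to reduce TP$_2$ to the single family of inequalities $A(d,i,j+1)\,A(d,i+1,j)\le A(d,i,j)\,A(d,i+1,j+1)$ for consecutive indices (note that \ref{lem:consecutive} requires strictly positive interior entries, so the rows $i=0$ and $i=d-1$ have to be treated separately via the vanishing pattern of $A(d,i,j)$ --- a point your reduction glosses over). But that inequality \emph{is} the theorem, and your proposal does not prove it. Route 1 is speculative: no three-term column recurrence with a nonnegative tridiagonal transfer matrix is exhibited, and its base case is misstated --- the column $j=0$ is not the Eulerian row $A(d,i)$ but counts permutations ending in $d$ (the row sums give the Eulerian numbers). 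Route 2 is the right idea in spirit, yet ``splicing suffixes'' of two permutations $u,v$ is not a well-defined operation: their suffixes occupy different value sets, and an exchange at ``the first position where the budget overtakes'' neither visibly preserves both statistics nor is visibly reversible.

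This is precisely why the paper does not work on permutations directly. It transports the problem through the B\'ona--Ehrenborg bijection to labeled northeast lattice paths, where the defining constraints (L2), (L3) are local, and then constructs an explicit injection $\Phi:P(d,i,j+1)\times P(d,i+1,j)\rightarrow P(d,i,j)\times P(d,i+1,j+1)$ by a multi-case analysis at the first intersection of the two paths: in several cases a plain suffix swap violates (L2) or (L3) and the labels at the swap step must first be modified; in another case the swap point has to be pushed beyond the intersection using the auxiliary conditions (A$_r$), (B$_r$), (C$_r$) together with \ref{lem:parallel} and \ref{lem:end}; and injectivity then needs a separate argument ruling out collisions between plain swaps and relabeled swaps. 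None of this --- the actual combinatorial heart of the theorem, which you yourself flag as ``the hard part'' --- is carried out in your proposal, so what you have is a correct reduction plus an unexecuted plan rather than a proof.
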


As an immediate corollary of 
\ref{pro:preserve} and \ref{tp} we obtain:

\begin{corollary} \label{cor:bary}
    Let $\F$ be the barycentric subdivision. If
    $\Delta$ satisfies \eqref{eq:athtza} or satisfies \eqref{eq:athtza2}, then so
    does $\Delta_\F$. 
\end{corollary}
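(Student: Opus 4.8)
The plan is to deduce the statement directly from the two results already in place, so the argument is essentially one of assembly. First I would invoke \ref{tp}, which asserts that the $h$-vector transformation matrix $H_\F$ of the barycentric subdivision is TP$_2$. This is the substantive input: all of the genuine combinatorial work is carried out there (in \ref{sec:bary}), and the present corollary only has to connect that fact to the inequalities \eqref{eq:athtza} and \eqref{eq:athtza2}.

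With $H_\F$ known to be TP$_2$, I would then apply \ref{pro:preserve}. Since the barycentric subdivision is a face uniform subdivision, $H_\F$ and $\Delta_\F$ are defined as required in the hypotheses of that proposition. Part (i) then gives: if $\Delta$ is a $(d-1)$-dimensional simplicial complex satisfying \eqref{eq:athtza}, then $\Delta_\F$ satisfies \eqref{eq:athtza}; part (ii) gives the analogous conclusion for \eqref{eq:athtza2}. Applying part (i) in the first case and part (ii) in the second yields the claim.

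There is no real obstacle here: the single nontrivial hypothesis of \ref{pro:preserve} is the TP$_2$ property of $H_\F$, and that is precisely the content of \ref{tp}. At most one should remark that $\Delta_\F$ is again $(d-1)$-dimensional, so that the conditions \eqref{eq:athtza} and \eqref{eq:athtza2}, which are stated for $(d-1)$-dimensional complexes, apply verbatim to the subdivided complex and the conclusion is meaningful as stated.
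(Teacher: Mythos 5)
Your proposal is correct and coincides with the paper's argument: the corollary is obtained there exactly as you describe, by combining \ref{tp} (TP$_2$ for the barycentric $H_\F$) with \ref{pro:preserve}, parts (i) and (ii). Nothing further is needed.
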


In case $\F$ is the $r$\textsuperscript{th}-edgewise 
subdivision an even stronger result holds.

\begin{theorem}[Mao, Wang \cite{MW22}] \label{thm:edge}
  Let $\F$ be the $r$\textsuperscript{th}-edgewise
subdivision. Then $H_\F$ is TP.
\end{theorem}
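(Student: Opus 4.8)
The plan is to make $H_\F$ completely explicit and then deduce total positivity by a standard combinatorial device. First I would record the effect of the $r$\textsuperscript{th}-edgewise subdivision on $h$-polynomials: for a $(d-1)$-dimensional complex $\Delta$ with $h^\Delta(t) = \sum_{i=0}^d h_i^\Delta t^i$ one has
\[
   h^{\Delta_\F}(t) \;=\; (1-t)^d\, D_r\!\left[\frac{h^\Delta(t)}{(1-t)^d}\right],
\]
where $D_r$ is the decimation operator $D_r\!\left(\sum_k a_k t^k\right) = \sum_k a_{rk}\,t^k$ on formal power series. Since $(1-t^r)^d(1-t)^{-d} = (1+t+\cdots+t^{r-1})^d$, this is equivalent to the closed form
\[
   (H_\F)_{ij} \;=\; [t^{ri-j}]\,(1+t+\cdots+t^{r-1})^d,
\]
that is, $(H_\F)_{ij}$ is the number of sequences $(a_1,\ldots,a_d)\in\{0,1,\ldots,r-1\}^d$ with $a_1+\cdots+a_d = ri-j$. (This description is implicit in the analyses of \cite{DF09,MW22} and can be extracted from the Veronese/edgewise formalism, cf.\ \cite{Atha22,MWe17}; it is consistent with the symmetry $h_{ij}=h_{d-i,d-j}$ observed earlier, since $1+t+\cdots+t^{r-1}$ is palindromic.) A convenient preliminary reduction, for fixed $d$: from $D_rD_s = D_{rs}$ one gets $H_\F^{(rs)} = H_\F^{(r)}\,H_\F^{(s)}$, so by the Cauchy--Binet formula it suffices to prove the claim for $r$ prime --- though the construction below is uniform in $r$.

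The heart of the argument is to realise $H_\F$ as the weighted source-to-sink path matrix of an acyclic planar network $\Gamma$ with non-negative edge weights. I would assemble $\Gamma$ from $d$ consecutive ``letter gadgets'', the $\ell$\textsuperscript{th} of which lets a path accumulate any value $a_\ell\in\{0,1,\ldots,r-1\}$, and attach the $d+1$ sources and $d+1$ sinks so that the total weight of paths from source $j$ to sink $i$ equals the number of bounded compositions above; the dilation $j\mapsto ri-j$ of the indices is exactly what the source/sink attachment has to encode. Once $\Gamma$ is checked to be planar, the Lindstr\"om--Gessel--Viennot lemma identifies every minor of $H_\F$ with a count of families of vertex-disjoint paths joining a subset of the sources to a subset of the sinks, hence with a non-negative number, so $H_\F$ is TP. An equivalent route, which I would keep in reserve, is to exhibit directly a factorisation of $H_\F$ into elementary lower- and upper-bidiagonal matrices with non-negative entries and a positive diagonal (a Whitney-type bidiagonal factorisation): the small cases are suggestive --- for $d\le 2$ one has $H_\F = \left(\begin{smallmatrix}1&0&0\\ r-1&r&r-1\\ 0&0&1\end{smallmatrix}\right)$, which is lower-bidiagonal times $\mathrm{diag}(1,r,1)$ times upper-bidiagonal --- and the operator factorisation $h\mapsto(1-t)^dD_r[h(1-t)^{-d}]$ predicts the shape of the bidiagonal factors in general, with entries that are explicit rational expressions in binomial coefficients.

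The main obstacle is exactly the dilation by $r$. One cannot simply invoke P\'olya-frequency / Aissen--Schoenberg--Whitney theory, because the honest untruncated Toeplitz matrix $\left([t^{n-j}](1+t+\cdots+t^{r-1})^d\right)_{n,j\ge 0}$ --- of which $H_\F$ is a submatrix, obtained by keeping the rows $n\in\{0,r,2r,\ldots,rd\}$ --- is itself \emph{not} totally positive as soon as $r\ge 3$, since then $1+t+\cdots+t^{r-1}$ has non-real roots; only when $r=2$ does $H_\F$ arise as a submatrix of a totally positive Toeplitz matrix and the statement become immediate. So the network $\Gamma$ (or the bidiagonal factorisation) must use the ``keep every $r$\textsuperscript{th} coefficient'' operation in an essential way, and making that operation compatible with planarity is the subtle point; once the network is correctly set up, checking that its path matrix reproduces the counts above is a routine, if somewhat tedious, verification. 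Combined with \ref{pro:preserve}, the theorem then shows that \eqref{eq:athtza} and \eqref{eq:athtza2} are preserved by the $r$\textsuperscript{th}-edgewise subdivision, improving on the TP$_2$ statement available from \cite{DF09}.
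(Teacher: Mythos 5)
The paper does not prove \ref{thm:edge} at all: the statement is imported verbatim from Mao and Wang \cite{MW22} (with the weaker TP$_2$ version credited to Diaconis and Fulman \cite{DF09}), so there is no internal proof to compare your argument against; what you offer is an independent proof of the cited result and has to be judged as such.

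Judged that way, it is a plan with the decisive step missing. Your preparatory material is correct: the Veronese identity $h^{\Delta_\F}(t)=(1-t)^d D_r\big[h^\Delta(t)/(1-t)^d\big]$ is the standard description of the $r$\textsuperscript{th}-edgewise subdivision, the closed form $(H_\F)_{ij}=[t^{ri-j}](1+t+\cdots+t^{r-1})^d$ follows and is consistent with the symmetry $h_{ij}=h_{d-i,d-j}$, and your observation that the Aissen--Schoenberg--Whitney/Toeplitz route fails for $r\ge 3$ (since $1+t+\cdots+t^{r-1}$ has non-real zeros, the ambient Toeplitz matrix is not TP and only the row-selected submatrix is) correctly identifies why the theorem is not a formality. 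But the heart of the argument --- a planar acyclic network with non-negative weights whose source-to-sink path matrix is $H_\F$, so that Lindstr\"om--Gessel--Viennot applies, or equivalently an explicit bidiagonal factorisation --- is never constructed. You describe ``letter gadgets'' and then state that encoding the dilation $j\mapsto ri-j$, i.e.\ the $r$-section operator $D_r$, compatibly with planarity ``is the subtle point''; that subtle point \emph{is} the theorem. A posteriori such a network and such a factorisation exist for every invertible TP matrix (here $\det H_\F=r^{\binom{d+1}{2}}\neq 0$), so their existence is no evidence that your construction can be carried out, and the $d\le 2$ computation does not determine the factors in general. The reduction to prime $r$ via $H^{(rs)}_\F=H^{(r)}_\F H^{(s)}_\F$ and Cauchy--Binet, while valid, buys nothing since the construction must still be done for arbitrary prime $r$. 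As it stands the total positivity itself remains unproved; to complete the argument you must either exhibit the network or factorisation explicitly, or fall back on citing \cite{MW22}, as the paper does.
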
 

Note that the fact that
$H_\F$ is TP$_2$ was proved before by Diaconis and Fulman \cite{DF09}. 
Again as an immediate corollary we get:

\begin{corollary} \label{cor:edge}
    Let $\F$ be the $r$\textsuperscript{th}-edgewise subdivision. If
    $\Delta$ satisfies \eqref{eq:athtza} or satisfies \eqref{eq:athtza2}, then so
    does $\Delta_\F$. 
\end{corollary}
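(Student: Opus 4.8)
The plan is simply to chain together the two results that are already in place. By \ref{thm:edge}, when $\F$ is the $r$\textsuperscript{th}-edgewise subdivision the $h$-vector transformation matrix $H_\F$ is totally positive, that is, TP. Every TP matrix is in particular TP$_2$, since all of its $(k\times k)$-minors are non-negative for $k=1,2$. Hence $H_\F$ satisfies the hypothesis of \ref{pro:preserve}.

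Now I would invoke \ref{pro:preserve} directly: part (i) yields that if $\Delta$ satisfies \eqref{eq:athtza} then $\Delta_\F$ satisfies \eqref{eq:athtza}, and part (ii) yields the same statement for \eqref{eq:athtza2}. Together these are exactly the assertion of the corollary. There is no real obstacle: the only point to keep in mind is that \ref{pro:preserve} is phrased for a fixed dimension $d-1$, so one applies it in the dimension of the given complex $\Delta$, with $H_\F$ the corresponding edgewise transformation matrix supplied by \ref{thm:edge}. All of the genuine mathematical content is carried by the two cited inputs — the TP property of the edgewise $h$-matrix (Mao and Wang, with the weaker TP$_2$ statement already due to Diaconis and Fulman) and \ref{pro:preserve} — so the proof consists of nothing more than recording this deduction.
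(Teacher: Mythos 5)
Your proposal is correct and is exactly the paper's (implicit) argument: the paper also derives \ref{cor:edge} immediately from \ref{thm:edge} (TP, hence TP$_2$) together with \ref{pro:preserve}. Nothing is missing.
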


Let us now look into generalizations of these results. 
Based on experimental evidence we conjecture that the conclusion of
\ref{thm:edge} also holds for barycentric subdivision.

\begin{conjecture}
  Let $\F$ be the barycentric subdivision.
 Then $H_\F$ is TP.
\end{conjecture}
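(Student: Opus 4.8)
The plan is to find an explicit combinatorial formula for the entries $h_{ij}$ of the barycentric $h$-transformation matrix $H_\F$ and then to recognize this matrix as totally positive by exhibiting a planar network (equivalently, a factorization into bidiagonal totally positive matrices). Concretely, the $(i,j)$-entry of $H_\F$ is known to count permutations in $\mathfrak{S}_j$ (or a closely related set, after the standard normalization forced by $h_0^\Delta = 1$) with a prescribed number of descents, weighted according to the refined descent statistics used in the proof of \ref{tp}; see also \ref{thm:inverse} for the structure of $H_\F^{-1}$. So the first step is to isolate, from the barycentric refinement, a clean bivariate generating function $\sum_{i,j} h_{ij}\, x^i y^j$ — I expect this to be expressible through Eulerian polynomials $A_j(t)$ and a simple substitution, since barycentric subdivision of the $h$-polynomial is classically governed by $(1-t)^{d+1}\sum_{m\ge 0} (mt+ \text{stuff})^d \cdots$ type identities.

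Second, I would try to prove TP via the Lindström–Gessel–Viennot lemma: construct an acyclic planar network whose weighted path matrix is exactly $H_\F$. The Eulerian-number origin of the entries is promising here, because the Eulerian triangle itself is totally positive (this is classical, via the matrix factorization $E = L_1 L_2 \cdots$ into unit lower bidiagonal factors, or via the Foata–Schützenberger planar-network model for descents). The task would be to show that the extra barycentric weighting — which multiplies/reindexes Eulerian numbers to produce $h_{ij}$ — corresponds to composing that network with another totally positive network (a diagonal rescaling, a shift, or a second triangular TP matrix), so that the composite is still TP by the Cauchy–Binet / path-composition argument. This is the step where I'd lean on \ref{lem；product} in spirit, but for full TP rather than just TP$_2$.

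The main obstacle is that TP$_2$ only needs nonnegativity of $2\times 2$ minors, which for our matrices reduces (via \ref{lem:consecutive}) to log-concavity-type inequalities among consecutive rows/columns — these are comparatively soft and were dispatched in \ref{tp} by a direct argument. Full TP requires controlling $k\times k$ minors for all $k$ simultaneously, and a direct minor-by-minor computation is hopeless; one essentially must produce the planar network or the bidiagonal factorization. The delicate point is that the barycentric weights are not themselves ratios of factorials in an obvious Jacobi–Trinudi form, so verifying that they assemble into a genuine TP factor (rather than merely a TP$_2$ one) may require a nontrivial identity — for instance showing that $H_\F$ equals a product $P \cdot A \cdot P'$ where $A$ is the (known TP) Eulerian triangle and $P,P'$ are triangular TP matrices coming from the combinatorics of \ref{thm:inverse}. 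Establishing such a factorization, and checking that each factor is individually TP, is where the real work lies; the experimental evidence cited in the paper strongly suggests the factorization exists, but pinning it down rigorously is the crux.
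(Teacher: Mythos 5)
This statement is not proved in the paper at all: it is stated as a conjecture, supported only by experimental evidence, and the authors themselves only establish the much weaker TP$_2$ property (\ref{tp}) via an explicit injection $\Phi:P(d,i,j+1)\times P(d,i+1,j)\rightarrow P(d,i,j)\times P(d,i+1,j+1)$ on labeled lattice paths. So there is no "paper proof" to match your argument against, and your proposal would have to stand on its own as a complete proof — which it does not. What you have written is a research plan, not a proof: the decisive step, namely exhibiting a planar network or a bidiagonal (or $P\cdot A\cdot P'$-type) factorization of $H_\F$ with each factor totally positive, is never constructed. You say such a factorization "should exist" and that the experimental evidence "strongly suggests" it, but you give no candidate weights for the network, no candidate factors, and no identity that would verify the product equals $H_\F$. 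Without that, Lindström--Gessel--Viennot and Cauchy--Binet have nothing to act on.

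A further concrete difficulty you underestimate: the entries of $H_\F$ are not Eulerian numbers but the refined quantities $A(d,i,j)$ counting permutations by descent number \emph{and} last letter (\ref{baryh}). Total positivity of the Eulerian triangle does not transfer to this refinement by a diagonal rescaling or shift; indeed the factorization the paper does record, $H_\F=A_{d+1}B_{d+1}A_{d+1}^{-1}$ in the proof of \ref{thm:inverse}, involves the alternating-sign matrix $A_{d+1}$, which is very far from TP, so the "compose TP networks" strategy cannot be applied to that decomposition as it stands. Even the $2\times 2$ case required the rather delicate path-swapping injection of \ref{prop:injection}, with the problematic configuration (U) handled separately; there is no indication that this injection extends to the higher-order alternating sums that $k\times k$ minors demand. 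In short, your outline identifies the right family of tools, but the conjecture remains open both in the paper and in your proposal, and the gap is exactly the step you flag as "the crux."
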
 

On the other hand, it is easy to see that not all face uniform subdivisions $\F$ have a TP$_2$ transformation matrix $H_\F$. 

\begin{example} \label{ex:notp}
  Let $\F$ be the subdivision of $d$-dimensional simplicial complexes
  which replaces each $d$-simplex by a cone over its boundary. 
  The $f_{ij}$ here take following form 
  $$f_{ij} = \left\{ \begin{array}{ccc} 0 & \text{ for } 0 \leq j < i < d \\
                                        1 & \text{ for } j=i < d \\
                                        \binom{d+1}{i} & \text{ for } 0 \leq j < d = i \end{array} \right.$$
  Then $H_\F$ takes the following form:
  $$H_\F = \begin{pmatrix} 1 & 0 & 0 & 0 & \cdots & 0 & 0 \\
                           1 & 2 & \cgreen{1} & \cgreen{1} & \cdots & 1 & 1 \\
                           1 & 1 & \cgreen{2} & \cgreen{1} & \cdots & 1 & 1 \\
                           \vdots & \vdots & \vdots & \vdots & \vdots & \vdots & \vdots \\
                           1 & 1 & 1 & 1& \cdots & 2 & 1 \\
                           0 & 0 & 0 & 0& \cdots & 0 &  1
                           \end{pmatrix}.$$
                           For example the highlighted $(2 \times 2)$-submatrix has negative determinant.
                           If $\F^ n$ is the $n$\textsuperscript{th} iteration of this 
                           subdivision then $H_{\F^ n} = H_\F^ n$. It is
                           easily checked that for high enough $n$ 
                           in those powers 
                           there will be $(2 \times 2)$-submatrices 
                           with arbitrarily negative determinant. 
                           Nevertheless, we were not able to find 
                           a simplicial complex satisfying 
                           \eqref{eq:athtza} for which one of those
                           iterations breaks these inequalities.
\end{example}   

The subdivision from \ref{ex:notp} is quite special as the subdivision spares faces of certain dimensions from being subdivided. We could not find a 
face uniform triangulation $\F$ which subdivides faces of all dimensions 
for which $H_\F$ is not TP$_2$. 

\begin{question}
    Which geometric conditions on a face uniform triangulation $\F$ imply that $H_\F$ is
    TP$_2$ (resp., TP) ?   
\end{question}

For example, experiments show that in small dimensions
the antiprism-triangulations (see
\cite{ABK22}) and the interval-subdivision (see \cite{AN20}) 
have TP$_2$ and even TP matrices $H_\F$. Given this experimental evidence 
and the results from this paper we strongly believe that it is possible
to define a class of face uniform triangulations whose matrices $H_\F$ are
TP which includes all those as special cases.

\section{TP\texorpdfstring{$_2$}~~for barycentric subdivision}
\label{sec:bary}

In this section we prove \ref{tp}: 
the TP$_2$-property for $H_\F$ where $\F$ is
the barycentric subdivision. This will turn out to be quite involved.

Let $$D(\sigma)=\big\{\,i\in [d-1]\,\big|\,\sigma(i)>\sigma(i+1)\,\big\}$$
be the descent set of the permutation $\sigma$,
$\des(\sigma):=\sharp D(\sigma)$ be its number of descents and
$S_d$ be the symmetric group on $[d]$.
For $0\leq i,j\leq d-1$, we denote by $A(d,i,j)$ the number of permutations $\sigma\in S_d$ such that
$\des(\sigma)=i$ and $\sigma(d)=d-j$.
We define $A(d,i,j)$ for all $d\geq 1$ and all integers $i$ and $j$.
In particular $A(d,i,j)=0$ if $i\leq -1$ or $i\geq d$.
With these conventions it is easily seen that 
\begin{align} \label{eq:posentry}
   A(d,i,j) = 0 & \Leftrightarrow \begin{array}{c} i=0  \text{ and } j \neq 0  \\   \text{or} \\ i=d-1  \text{ and } j \neq d-1.
   \end{array} 
\end{align}

The following connects $A(d,i,j)$ to the barycentric subdivision.

\begin{lemma}[Theorem 1 \cite{BW08}] \label{baryh}
  Let $\F$ be the barycentric subdivision of $(d-2)$-dimensional 
  simplicial complexes. 
  Then $H_\F = \big(\, A(d,i,j)\,\big)_{0 \leq i ,j \leq d-1}$.
  \end{lemma}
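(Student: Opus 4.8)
The plan is to compute $H_\F$ explicitly from the combinatorics of the barycentric subdivision and then to recognize its entries as the numbers $A(d,i,j)$; the last step is where the real work lies. First I would use that a $(k-1)$-dimensional face of $\Delta_\F$ is precisely a chain $\sigma_1\subsetneq\sigma_2\subsetneq\cdots\subsetneq\sigma_k$ of non-empty faces of $\Delta$. Sorting such chains by their top member $\sigma_k$, and noting that for a fixed face $\tau$ of $\Delta$ with $\#\tau=m$ the map $(\sigma_1,\ldots,\sigma_k)\mapsto(\sigma_1,\sigma_2\setminus\sigma_1,\ldots,\tau\setminus\sigma_{k-1})$ is a bijection onto the ordered set partitions of $\tau$ into $k$ non-empty blocks, one obtains $f_{k-1}^{\Delta_\F}=\sum_{m\ge1}f_{m-1}^{\Delta}\,k!\,S(m,k)$ for $k\ge1$, where $S(m,k)$ is a Stirling number of the second kind; in particular $\dim\Delta_\F=\dim\Delta=d-2$.

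Next I would feed this into the standard dictionary $\sum_{i\ge0}h_i^\Delta x^i=\sum_{m\ge0}f_{m-1}^\Delta\,x^m(1-x)^{d-1-m}$ and its inverse $f_{m-1}^\Delta=\sum_i\binom{d-1-i}{m-i}h_i^\Delta$. Since $H_\F$ is the unique matrix with $h^{\Delta_\F}=H_\F\,h^\Delta$, this yields $(H_\F)_{i',i}=[x^{i'}]\,Q_i(x)$ where $Q_i(x)=\sum_m\binom{d-1-i}{m-i}\sum_k k!\,S(m,k)\,x^k(1-x)^{d-1-k}$. Using the classical identity $\sum_k k!\,S(m,k)\,x^k(1-x)^{m-k}=x\,A_m(x)$ for $m\ge1$, where $A_m(x)=\sum_{\pi\in S_m}x^{\des(\pi)}$ is the Eulerian polynomial (it drops out of $\sum_{n\ge0}n^m z^n=zA_m(z)/(1-z)^{m+1}$), together with the $m=0$ contribution $(1-x)^{d-1}$, one rewrites $Q_i(x)$ as a binomial combination of the polynomials $(1-x)^{d-1-m}A_m(x)$.

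It then remains to prove the identity $Q_i(x)=\sum_{\sigma\in S_d,\ \sigma(d)=d-i}x^{\des(\sigma)}$, equivalently that $[x^{i'}]Q_i(x)$ counts the permutations $\sigma\in S_d$ with $\des(\sigma)=i'$ and $\sigma(d)=d-i$, which is exactly $A(d,i',i)$. This identification is the main obstacle. I expect to establish it combinatorially by refining the usual lexicographic shelling of $\Delta_\F$ so as to be compatible with a shelling of $\Delta$: inside a facet $F=\{v_1<\cdots<v_{d-1}\}$ the maximal chains of $\Delta_\F$ are indexed by permutations of $[d-1]$ and the size of the restriction face of a chain equals its number of descents, which already produces the factor $A_{d-1}(x)$; keeping track in addition of how the restriction face of $F$ within $\Delta$ forces part of the restriction face of the chain promotes a permutation of $[d-1]$ to one $\sigma\in S_d$ with $\sigma(d)=d-j$, and those with $i$ descents are counted by $A(d,i,j)$. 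Alternatively, after the reduction of the previous paragraph the required equality becomes a formal identity between Eulerian polynomials refined by their last letter, which can be verified by a short generating-function manipulation or a sign-reversing involution. Either way this completes the proof, since $(H_\F)_{i',i}=[x^{i'}]Q_i(x)=A(d,i',i)$.
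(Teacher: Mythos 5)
The paper itself gives no argument for this lemma: it is quoted as Theorem~1 of \cite{BW08}, so what you have written is a re-derivation of the cited result, and your route is essentially the original Brenti--Welker one — compute the $f$-vector transformation of barycentric subdivision via ordered set partitions ($f_{k-1}^{\Delta_\F}=\sum_m f_{m-1}^{\Delta}\,k!\,S(m,k)$), pass to $h$-vectors, and identify the columns of the resulting matrix with the descent polynomials of $S_d$ refined by the last letter. Your reductions up to that point are correct: one does get $(H_\F)_{i',i}=[x^{i'}]Q_i(x)$ with $Q_i(x)=\delta_{i,0}(1-x)^{d-1}+\sum_{m\ge\max(i,1)}\binom{d-1-i}{m-i}(1-x)^{d-1-m}\,x\,A_m(x)$, and this polynomial does equal $\sum_{\sigma\in S_d,\ \sigma(d)=d-i}x^{\des(\sigma)}$; that equality is precisely the identity behind the theorem you are reproving.

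The problem is that this decisive identification is exactly what you leave open, and you say so yourself (``this identification is the main obstacle'', ``I expect to establish it''). Of your two suggested routes, the shelling-based one does not work as stated: an arbitrary $(d-2)$-dimensional complex need not be shellable, and the promotion of a permutation of $[d-1]$ indexing a maximal chain to a permutation $\sigma\in S_d$ with $\sigma(d)=d-j$ and the right descent count is asserted, not constructed. (One could legitimately restrict to a single simplex, since $H_\F$ depends only on the face-uniform data $f_{ij}$, but you do not say this, and the combinatorial dictionary would still be missing.) The generating-function alternative does work and is short: dividing by $(1-x)^d$ and using $xA_m(x)/(1-x)^{m+1}=\sum_{n\ge0}n^mx^n$, the claim becomes the refined Worpitzky-type identity $\sum_{\sigma\in S_d,\ \sigma(d)=d-i}x^{\des(\sigma)}=(1-x)^d\sum_{n\ge0}n^{i}(n+1)^{d-1-i}x^n$, which has a standard barred-permutation proof — but none of this is carried out in your text. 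So the proposal is a sound and fillable outline of the known proof rather than a complete proof: the crux identity is only conjectured, and one of the two proposed ways to reach it would fail as written.
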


We will employ a bijection between labeled paths and permutations given in 
\cite{BE00,Bona22}. Let $P(d)$ be the set of $d$-tuples $\big(\,(a_1,u_1), \ldots ,(a_d,u_d)\,\big)$
in $\big(\,\{ E,N\} \times \NN \,\big)^d$, 
satisfying: 
\begin{itemize}
  \item[(L1)]  $a_1 = E$ and $u_1=1$,
  \item[(L2)] if $a_i = a_{i+1} = N$, or $a_i=a_{i+1} = E$ then 
	  $u_i\geq u_{i+1}$,
  \item[(L3)] if $a_i \neq a_{i+1}$ then $u_i+ u_{i+1}\leq i+1$.
\end{itemize}

Interpreting $N$ as a step north and $E$ as a step east, we consider an element of $P(d)$ as 
a northeast path of length $d$ with each step $(a_i,u_i)$ labeled with a natural number $u_i$. 

The the bijection $\Psi : S_d \rightarrow P(d)$ is defined as follows.
For $\sigma = \sigma_1\cdots \sigma_d \in S_d$ we set $\Psi(\sigma) = 
\big(\,(a_1,u_1), \ldots , (a_d,u_d)\,\big)$ where:

\begin{itemize}
    \item $(a_1,u_1) = (E,1)$
    \item for $2 \leq i \leq d$ we obtain $(a_i,u_i)$ as follows.
    Let $\tau = \tau_1\cdots\tau_i \in S_i$ be the
permutation such that for $1 \leq \ell < j \leq i$ we have $$\tau_\ell < \tau_j \Leftrightarrow
\sigma_\ell < \sigma_j.$$

\begin{itemize}
  \item If the position $i-1$ in $\sigma$ or equivalently $\tau_{i-1}$ is a descent, let the $a_i = N$ and set $u_i=\tau_i$.
  \item If the position $i-1$ in $\sigma$ or equivalently $\tau_{i-1}$ is an ascent, let the $a_i= E$ and set $u_i=i+1-\tau_i$.
\end{itemize}
\end{itemize}

The example from \cite[FIG.2]{BE00} shown in \ref{squ11} illustrates the definition of $\Psi$.

\begin{figure}[ht]
\centering
\begin{tikzpicture}
\draw (0,0)--(2,0)--(2,2)--(3,2)--(3,3);

\foreach \x in {0,1,2,}
\foreach \y in {0}
\fill (\x,\y) circle(0.05); 
\foreach \x in {2}
\foreach \y in {1,2}
\fill (\x,\y) circle(0.05); 
\fill (3,2) circle(0.05); \fill (3,3) circle(0.05);

\foreach \x in {0.5,1.5}
\foreach \y in {0}
\node at(\x,\y)[above,font=\scriptsize] {$1$};
\node at(2,0.5)[right,font=\scriptsize] {$2$};
\node at(2,1.5)[right,font=\scriptsize] {$1$};
\node at(2.5,2)[above,font=\scriptsize] {$1$};
\node at(3,2.5)[right,font=\scriptsize] {$5$};
\end{tikzpicture}
\caption{The image of the permutation $243165$.}
\label{squ11}
\end{figure}

\begin{theorem}[\cite{BE00}]
  The map $\Psi : S_d \rightarrow P(d)$ is a bijection.
\end{theorem}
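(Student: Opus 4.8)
The plan is to verify that $\Psi$ is well-defined and then construct an explicit inverse. First I would check that for every $\sigma \in S_d$ the tuple $\Psi(\sigma)$ actually lies in $P(d)$, i.e.\ that it satisfies (L1)--(L3). Condition (L1) is immediate from the definition $(a_1,u_1)=(E,1)$. For (L2) and (L3) the key observation is that the entry $u_i$ records, among the first $i$ values of $\sigma$ (equivalently the pattern $\tau\in S_i$), either the \emph{rank} $\tau_i$ of $\sigma_i$ (when the previous position is a descent, so the step is $N$) or its \emph{co-rank} $i+1-\tau_i$ (when the previous position is an ascent, so the step is $E$). The constraints (L2) and (L3) then translate into elementary monotonicity facts about how ranks and co-ranks behave when one passes from the pattern of $\sigma_1\cdots\sigma_i$ to that of $\sigma_1\cdots\sigma_{i+1}$; for instance, two consecutive $N$-steps force a comparison of consecutive ranks that is controlled because inserting a new largest/position only shifts ranks in a predictable way, and a change of direction $a_i\neq a_{i+1}$ pits a rank against a co-rank, whose sum is bounded by $i+1$ precisely because the total number of relevant relative positions is $i+1$.

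The main work, and the main obstacle, is the construction of $\Psi^{-1}$ and the proof that it is two-sided inverse to $\Psi$. The idea is to reconstruct $\sigma$ from $\big((a_1,u_1),\ldots,(a_d,u_d)\big)$ by building the permutation patterns $\tau^{(i)}\in S_i$ one value at a time, in the order $i=1,2,\ldots,d$. At step $i$ we already have the relative order of $\sigma_1,\ldots,\sigma_{i-1}$ (encoded by $\tau^{(i-1)}$), and the pair $(a_i,u_i)$ tells us exactly where $\sigma_i$ must be inserted into that linear order: if $a_i=N$ it is inserted so as to have rank $u_i$ and to create a descent from position $i-1$; if $a_i=E$ it is inserted so as to have co-rank $u_i$ and to create an ascent. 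Conditions (L2)--(L3) are exactly what is needed to guarantee that this insertion is always legal (the prescribed rank/co-rank is available and compatible with the ascent/descent requirement at position $i-1$, using the fact that $\tau^{(i-1)}_{i-1}=u_{i-1}$ or $i-u_{i-1}$ depending on $a_{i-1}$). Running this to $i=d$ produces a unique $\sigma\in S_d$ up to the overall labeling, and since $S_d$ is determined by its sequence of one-step patterns together with the knowledge of the global order, $\sigma$ itself is recovered.

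Finally I would check that the two constructions undo each other. One direction is essentially built in: applying the insertion procedure to $\Psi(\sigma)$ reproduces the pattern sequence $\tau^{(1)},\ldots,\tau^{(d)}$ of $\sigma$, hence $\sigma$. For the other direction one checks that, starting from an arbitrary path $p=\big((a_i,u_i)\big)_{i=1}^{d}\in P(d)$, running the insertion procedure and then applying $\Psi$ returns $p$; this amounts to verifying that at each step $i$ the rule defining $(a_i,u_i)$ from the pattern $\tau^{(i)}$ inverts the insertion rule, which is immediate from the definitions of rank and co-rank. Since $\#S_d=\#P(d)$ is not needed for this argument (we produce an explicit bijection), the proof is complete once well-definedness and the inverse are established. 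I expect the bookkeeping in verifying (L2)--(L3) and in checking the legality of each insertion to be the only genuinely delicate part; everything else is a direct unwinding of the definition of $\Psi$.
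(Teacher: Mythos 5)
The paper itself offers no proof of this statement; it is quoted verbatim from B\'ona--Ehrenborg \cite{BE00}, so there is nothing internal to compare against. Your plan is correct and is essentially the standard argument from that source: check (L1)--(L3) for $\Psi(\sigma)$ by the rank/co-rank interpretation of the labels, and invert $\Psi$ by inserting the $i$\textsuperscript{th} value at the rank $u_i$ (for $N$) or co-rank $u_i$ (for $E$), where the legality of each insertion follows exactly as you indicate from (L2)--(L3) combined with $\tau^{(i-1)}_{i-1}=u_{i-1}$ or $i-u_{i-1}$ according to $a_{i-1}$; note also that (L1)--(L3) automatically force $u_i\leq i-1$ for $i\geq 2$, so the prescribed rank is always available.
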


By construction for $\sigma = \sigma_1 \cdots \sigma_d$ and $\Psi(\sigma) = 
\big(\, (a_1,u_1),\ldots, (a_d,u_d)\,\big)$ we have:

\begin{align}
    \label{des}
\sigma \text{ has } i \text{ descents } & \Leftrightarrow \Psi(\sigma) \text{ has } i \text{ steps } 'N'
\end{align} 
and 
\begin{align}
    \label{last} \sigma_d = d-j & \Leftrightarrow u_d = \left\{ \begin{array}{ccc} d-j & \text{ if } & a_d=N \\ j+1 & \text{ if } & a_d=E \end{array} \right. .
\end{align}

Let $P(d,i,j)$ be the set of labeled paths in $P(d)$ with $i$ steps $N$ and $$u_d = \left\{ \begin{array}{ccc} d-j & \text{ if } & a_d=N \\ j+1 & \text{ if } & a_d=E \end{array} \right..$$
It follows that:

\begin{corollary}
 $\Psi$ restricts to a bijection between $A(d,i,j) \subseteq S_d$  and $P(d,i,j)
  \subseteq P(d)$ for all $0\leq i,j\leq d-1$.
\end{corollary}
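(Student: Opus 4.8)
The statement is an immediate consequence of the three facts established just above it, so the plan is simply to chain them together cleanly; there is no genuine obstacle, and the only real care needed is to acknowledge the mild notational overloading of $A(d,i,j)$, which is introduced as the \emph{number} of permutations with $\des(\sigma)=i$ and $\sigma(d)=d-j$ but is used in the corollary as the \emph{set} of such permutations. I would open the proof by fixing this convention: write $A(d,i,j)$ also for the set $\{\sigma\in S_d : \des(\sigma)=i,\ \sigma(d)=d-j\}$, whose cardinality is the number $A(d,i,j)$.

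Since $\Psi : S_d \to P(d)$ is a bijection by the theorem just stated, to prove that it restricts to a bijection between two distinguished subsets it suffices to verify the membership equivalence $\sigma \in A(d,i,j) \iff \Psi(\sigma) \in P(d,i,j)$; no further injectivity or surjectivity needs checking, as these are inherited from $\Psi$. For this equivalence I would unpack the two defining conditions separately. By \eqref{des}, the condition $\des(\sigma)=i$ is equivalent to $\Psi(\sigma)$ having exactly $i$ steps labeled $N$. By \eqref{last}, the condition $\sigma_d = d-j$ is equivalent to the last label $u_d$ of $\Psi(\sigma)$ equaling $d-j$ when $a_d=N$ and $j+1$ when $a_d=E$ — and this is, verbatim, the condition appearing in the definition of $P(d,i,j)$. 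Conjoining the two equivalences gives $\sigma\in A(d,i,j)$ if and only if $\Psi(\sigma)\in P(d,i,j)$, i.e. the preimage of $P(d,i,j)$ under $\Psi$ is exactly $A(d,i,j)$.

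Finally I would conclude that, $\Psi$ being a bijection from $S_d$ onto $P(d)$, its restriction to $A(d,i,j)$ is a bijection onto $P(d,i,j)$. The range $0 \le i,j \le d-1$ poses no difficulty: the membership equivalence above holds for all integer pairs $i,j$, and outside this range both sides are empty (for the domain this is exactly \eqref{eq:posentry}, and the path side matches accordingly), so restricting attention to $0 \le i,j \le d-1$ loses nothing. This completes the argument; as noted, the step most worth stating carefully is the identification of $A(d,i,j)$ with a subset of $S_d$, since everything else is a direct appeal to \eqref{des}, \eqref{last}, and the bijectivity of $\Psi$.
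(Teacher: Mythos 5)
Your proposal is correct and matches the paper's (implicit) argument: the corollary is stated there as an immediate consequence of the bijectivity of $\Psi$ together with the equivalences \eqref{des} and \eqref{last}, which is exactly the membership-equivalence chain you spell out. Your remark on the set/number overloading of $A(d,i,j)$ is a fair clarification but not a substantive difference.
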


For two labeled paths $P=\big(\,(a_1,u_1),\ldots, (a_d,u_d)\,\big)$
and $Q = \big(\,(b_1,v_1),\ldots, (b_d,v_d)\,\big)$ in $P(d)$ and for
$1 \leq r < d$ consider the following conditions:

\begin{itemize}
    \item[(A$_r$)]  
    $a_r=a_{r+1} = N$ and $b_r=b_{r+1} = N$
    and 
    both $u_r\geq v_{r+1}$ and $v_r\geq u_{r+1}$ hold.
    \item[(B$_r$)] 
     $a_r=a_{r+1} = E$ and $b_r=b_{r+1} = E$
    and 
    both $u_r\geq v_{r+1}$ and $v_r\geq u_{r+1}$ hold.
    \item[(C$_r$)] either steps 
    $(N,u_r), (E,u_{r+1})$ and $(N,v_r)$, $(E,v_{r+1})$
    or steps
    $(E,u_r), (N,u_{r+1})$ and $(E,v_r)$, $(N,v_{r+1})$
    and both $u_r+v_{r+1}\leq r+1$ and $v_r+u_{r+1}\leq r+1$ hold.
\end{itemize}

\begin{lemma}
    \label{lem:parallel}
    Let $P = \big(\,(a_1,u_1),\ldots ,(a_d,u_d)\,\big)$ and
    $Q = \big(\,(b_1,v_1),\ldots, (b_d,v_d)\,\big)$ be two labeled paths in $P(d)$. 
    Let $1 \leq s < t \leq d$ such that
    we have $a_r = b_r$ for 
    $s \leq r \leq t$. Assume that either
    \begin{itemize}
        \item $a_s = b_s = E$ and $u_s < v_s$  or 
        \item $a_s = b_s = N$ and $u_s > v_s$ 
    \end{itemize}
    holds
  and that there is no $s \leq r <t$ such that one of 
    (A$_r$), (B$_r$) or (C$_r$) holds.
    
    Then for $s \leq r \leq t$ we have
    $u_r < v_r$ if $a_r=b_r = E$ and $u_r > v_r$ if $a_r=b_r = N$.
\end{lemma}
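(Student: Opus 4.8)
<br>

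The plan is to prove Lemma~\ref{lem:parallel} by induction on $r$, starting from $r = s$ where the base case holds by hypothesis. So suppose $s \le r < t$ and the claim is already known at index $r$; I want to deduce it at index $r+1$. By assumption $a_r = b_r$ and $a_{r+1} = b_{r+1}$, and none of (A$_r$), (B$_r$), (C$_r$) holds. The argument splits according to the four combinatorial patterns of the pair $(a_r a_{r+1}) = (b_r b_{r+1})$, namely $NN$, $EE$, $NE$, $EN$; in each case I will combine the induction hypothesis at $r$ with the path axioms (L2), (L3) applied to both $P$ and $Q$, and with the failure of the relevant condition among (A$_r$)--(C$_r$).

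First I would handle the case $a_r = a_{r+1} = N$ (and likewise for $Q$). The induction hypothesis gives $u_r > v_r$. Axiom (L2) for $P$ gives $u_r \ge u_{r+1}$ and for $Q$ gives $v_r \ge v_{r+1}$. Since (A$_r$) fails and we already know $u_r \ge u_{r+1} \ge v_{r+1}$ would be the only way the first half could fail... more carefully: (A$_r$) asks for both $u_r \ge v_{r+1}$ and $v_r \ge u_{r+1}$; the first of these already follows from $u_r > v_r \ge v_{r+1}$, so the failure of (A$_r$) forces $v_r < u_{r+1}$. Combined with $v_r \ge v_{r+1}$ this is not yet what I want; rather, from $v_r < u_{r+1} \le u_r$ and I need $u_{r+1} > v_{r+1}$: indeed $u_{r+1} > v_r \ge v_{r+1}$, so $u_{r+1} > v_{r+1}$, which is the desired inequality since $a_{r+1} = b_{r+1} = N$. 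The case $a_r = a_{r+1} = E$ is symmetric: the induction hypothesis is $u_r < v_r$, (L2) gives $u_r \ge u_{r+1}$ and $v_r \ge v_{r+1}$, the failure of (B$_r$) — whose condition $v_r \ge u_{r+1}$ follows from $v_r > u_r \ge u_{r+1}$ — forces $u_r < v_{r+1}$, hence $v_{r+1} > u_r \ge u_{r+1}$, giving $v_{r+1} > u_{r+1}$ as needed.

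Next the two "turning" cases, which use (L3) and the failure of (C$_r$). Suppose $a_r = E$, $a_{r+1} = N$ (and the same for $Q$). The induction hypothesis at $r$ is $u_r < v_r$ since $a_r = E$. Axiom (L3) applied to $P$ gives $u_r + u_{r+1} \le r+1$ and to $Q$ gives $v_r + v_{r+1} \le r+1$. Condition (C$_r$) demands both $u_r + v_{r+1} \le r+1$ and $v_r + u_{r+1} \le r+1$; the first of these follows from $u_r + v_{r+1} < v_r + v_{r+1} \le r+1$, so the failure of (C$_r$) forces $v_r + u_{r+1} > r+1$. Combined with $v_r + v_{r+1} \le r+1$ this yields $u_{r+1} > v_{r+1}$, which is the desired inequality since $a_{r+1} = b_{r+1} = N$. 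The case $a_r = N$, $a_{r+1} = E$ is handled in the same way: the induction hypothesis $u_r > v_r$ together with (L3) for $Q$, namely $v_r + v_{r+1} \le r+1$... wait, here (L3) applies at the descent-to-ascent turn, so for $P$ it reads $u_r + u_{r+1} \le r+1$ and for $Q$ reads $v_r + v_{r+1} \le r+1$; the half of (C$_r$) that reads $v_r + u_{r+1} \le r+1$ follows from $v_r < u_r$ and $u_r + u_{r+1} \le r+1$, so the failure of (C$_r$) forces $u_r + v_{r+1} > r+1$, and with $u_r + u_{r+1} \le r+1$ this gives $v_{r+1} > u_{r+1}$, as desired since $a_{r+1} = E$.

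The step I expect to be the main obstacle — or at least the one requiring the most care — is the bookkeeping in the two turning cases: one must be sure that in each of (L3) for $P$, (L3) for $Q$, and (C$_r$), the index threshold is the same value $r+1$ (it is, because all three refer to the step from position $r$ to position $r+1$), and one must correctly identify which of the two inequalities inside (C$_r$) is automatically satisfied by the induction hypothesis so that the failure of (C$_r$) yields useful information from the other one. Once the four cases are organized this way the proof is a short finite check; there is no hidden analytic difficulty, only the need to keep the direction of every inequality straight and to confirm that the hypotheses $a_r = b_r$ for all $s \le r \le t$ and the absence of (A$_r$)--(C$_r$) for $s \le r < t$ are exactly what is consumed at each inductive step.
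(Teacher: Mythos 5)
Your proposal is correct and follows essentially the same argument as the paper: induction on the step index, with a four-case analysis on the pattern of consecutive equal step directions, using (L2)/(L3) together with the observation that one inequality in the relevant condition (A$_r$), (B$_r$) or (C$_r$) is automatic from the induction hypothesis, so its failure forces the other inequality to fail and yields the strict inequality at the next step. The only cosmetic difference is that you step from $r$ to $r+1$ while the paper steps from $r-1$ to $r$, and your write-up is if anything slightly more careful about which half of each condition is automatically satisfied.
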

\begin{proof}
   We proceed by induction on $r$.
   By assumption the claim holds for $r = s$. 

   Let $s < r < t$ and assume the claim is proved for $r-1$.
   
   \begin{itemize}
   \item $a_{r-1}=b_{r-1}=a_r= b_r = N$

    By induction we know that 
    $u_{r-1} > v_{r-1}$.
    Then $v_r < v_{r-1} < u_{r-1}$. Since (A$_{r-1}$)
    does not hold, one of $u_{r-1}\geq v_{r}$ and $v_{r-1}\geq u_{r}$
    must be false. It follows that $v_{r-1} \geq u_r$ is false and
    hence $u_r > v_{r-1} > v_r$.
   \item $a_{r-1}=b_{r-1}=a_r= b_r = E$ 

   The proof of this case is analogous to the preceding one
   with (A$_r$) is replaced by (B$_r$).
   
   \item $a_{r-1}=b_{r-1}=N$, $a_r= b_r = E$ 
   
   
   Since $a_{r-1} = N$ by induction we have $u_{r-1} > v_{r-1}$ and
   by (L3) $u_{r-1}+u_r \leq r$ and 
   $v_{r-1} +v_r \leq r$. 
Since (C$_{r-1}$) does not hold, one of $v_{r-1} + u_r \geq r$ or 
$u_{r-1} +v_r \geq r$ must hold. By $u_{r-1} + u_r \leq r$ and
$v_{r-1} < u_{r-1}$ the inequality $v_{r-1} + u_r \geq r$
cannot hold. Hence we have $u_{r-1} +v_r \geq r$. Using
$u_{r-1}+u_r < r$ we deduce $u_r < v_r$.
\item $a_{r-1}=b_{r-1}=E$, $a_r= b_r = N$ 

The proof in this case is analogous to the preceding case.
\end{itemize}
\end{proof}

\begin{lemma} 
\label{lem:end}
  Let $P = \big(\,(a_1,u_1),\ldots, (a_d,u_d)\,\big)$ be a labeled path in $P(d,i,j+1)$ and $Q = \big(\,(b_1,v_1),\ldots, (b_d,v_d)\,\big)$ be a labeled path in $P(d,i+1,j)$. 
  Let $2 \leq s < d$ such that 
 $a_s=b_s$, $a_{s+1} = b_{s+1},
  \ldots, a_d = b_d$. 
  If either $a_{s} = b_{s} = N$ and $v_{s} < u_{s}$ or $a_{s} = b_{s} = E$ and $v_{s} > u_{s}$ holds, then there is an 
  $s \leq r < d$ such that
  one of (A$_r$), (B$_r$) or (C$_r$) holds.  
\end{lemma}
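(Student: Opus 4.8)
We argue by contradiction: suppose there is no $s \le r < d$ at which one of (A$_r$), (B$_r$), (C$_r$) holds. Then the hypothesis of \ref{lem:parallel} is satisfied on the block $[s,d]$, so for every $s \le r \le d$ we have $u_r < v_r$ when $a_r = b_r = E$ and $u_r > v_r$ when $a_r = b_r = N$. The strategy is to push this strict dominance all the way to the last coordinate $r = d$ and then contradict the prescribed endpoint data coming from membership in $P(d,i,j+1)$ versus $P(d,i+1,j)$. Recall that $P$ has $i$ steps $N$ and $Q$ has $i+1$ steps $N$; since $a_r = b_r$ for $s \le r \le d$, the extra $N$ of $Q$ must occur at a position $r < s$, so on the common tail both paths record the \emph{same} multiset of step-types — this is the consistency we will exploit together with the labels $u_d, v_d$.

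First I would record what \ref{lem:parallel} gives at $r = d$, splitting on $a_d = b_d$. If $a_d = b_d = N$, then $u_d > v_d$; but $P \in P(d,i,j+1)$ forces $u_d = d - (j+1) = d - j - 1$ and $Q \in P(d,i+1,j)$ forces $v_d = d - j$, so $u_d < v_d$, a contradiction. If instead $a_d = b_d = E$, then $u_d < v_d$; but now $u_d = (j+1)+1 = j+2$ and $v_d = j+1$, so $u_d > v_d$, again a contradiction. So in either case the conclusion of \ref{lem:parallel} is incompatible with the endpoint labels, and the lemma follows. The one gap to close is that \ref{lem:parallel} as stated produces the dominance on $[s,t]$ for $s < t \le d$ with $a_r = b_r$ throughout; here we may simply take $t = d$, since by hypothesis $a_r = b_r$ for all $s \le r \le d$, and the sign hypothesis at $r = s$ is exactly what is assumed.

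The main obstacle — really the only subtlety — is checking that \ref{lem:parallel} genuinely applies with $t=d$, i.e.\ that the "seed" sign condition at $r=s$ matches. The hypothesis here is stated as "$a_s=b_s=N$ and $v_s<u_s$" or "$a_s=b_s=E$ and $v_s>u_s$", which is precisely "$u_s>v_s$ in the $N$ case, $u_s<v_s$ in the $E$ case" — matching \ref{lem:parallel} verbatim. I would also double-check the degenerate possibility $s = d$: then the block is the single index $d$, \ref{lem:parallel} is vacuous, but the seed condition at $s = d$ already says $u_d > v_d$ (if $a_d = N$) or $u_d < v_d$ (if $a_d = E$), and the endpoint-label computation above yields the contradiction directly, with no induction needed. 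Thus the whole argument reduces to the two-line endpoint comparison once \ref{lem:parallel} is invoked; there is no further computation.
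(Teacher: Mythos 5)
Your proposal is correct and follows essentially the same route as the paper: assume no condition (A$_r$), (B$_r$), (C$_r$) holds, invoke \ref{lem:parallel} on the common block up to $r=d$, and contradict the forced last labels $u_d=d-j-1$ vs.\ $v_d=d-j$ (case $N$) or $u_d=j+2$ vs.\ $v_d=j+1$ (case $E$) coming from $P\in P(d,i,j+1)$ and $Q\in P(d,i+1,j)$. Your extra check of the seed sign condition and the (vacuous, since $s<d$ is assumed) case $s=d$ adds nothing beyond the paper's argument but is harmless.
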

\begin{proof}
    Assume there is no $r$ such that one of (A$_r$), (B$_r$) or (C$_r$) holds. 
    By \ref{lem:parallel} it follows that 
    for $s \leq j\leq d$ we have either $a_j = b_j = N$ and $u_j > v_j$ or
    $a_j = b_j = E$ and $u_j < v_j$ . 
    
    By $P \in P(d,i,j+1)$ and $Q \in P (d,i+1,j)$ it follows that if 
    $a_d = b_d = N$ then
    $u_d=d-j-1 < d-j = v_d$ and if $a_d = b_d = E$ then 
    $u_d = j + 2 > j + 1 = v_d$. This yields a contradiction
    and hence there is an $r$ such that one of (A$_r$), (B$_r$) or (C$_r$) holds. 
\end{proof}

For the next arguments we need to consider labeled paths in $P(d)$ as actual lattice paths.
Let $P = \big(\,(a_1,u_1),\ldots, (a_d,u_d)\,\big) \in P(d,i,j+1)$ and $Q = \big(\,(b_1,v_1),\ldots, (b_d,v_d)\,\big) \in P(d,i+1,j)$.
From now interpret $N$ as a step $(0,1)$ and $E$ as a step $(1,0)$
and place with this convention the
initial point of $P$ at $(-1,1)$ 
and the initial point of $Q$ at $(0,0)$.
Using this interpretation we can consider $P$ and $Q$ as lattice paths. We speak of any
lattice point shared by the two paths as an intersection of the paths. In particular, if
we say that $P$ and $Q$ intersect after steps $(a_k,u_k)$ and $(b_k,v_k)$ we mean that
after these two steps the two points have reached the same lattice point. 
It is easy to see that $P$ and $Q$ intersect after steps $(a_k,u_k)$ and $(b_k,v_k)$ whenever
$$\# \{\, \ell~|~1 \leq \ell\leq k, a_{\ell} = N\,\}+1=
\# \{\, \ell~|~1 \leq \ell \leq k, b_{\ell} = N\,\}.$$
In particular, they intersect after the $d$\textsuperscript{th} steps 
$(a_d,u_d)$ and $(b_d,v_d)$ in the lattice point $(d-i-1,i+1)$.

Let $k \leq d$ be minimal such that $P$ and $Q$ intersect after the $k$\textsuperscript{th} step. Since $P$ starts in a lattice point, which has larger second coordinate than the
starting point of $Q$, we have $a_k = E$ and $b_k = N$. 
We first define $\Phi_k\big(\,(P,Q)\,\big) = (P',Q')$ for an incomplete set of scenarios
and later deal with the remaining situations, which we will denote by case (U). 
We distinguish $k < d$ and $k = d$. 

\noindent {\sf Case:} $k < d$ 

In the subcases of this case we swap the parts of $P$ and $Q$ that follow 
some point which $P$ and $Q$ have in common.
Since conditions (L2) and (L3) are local, in order to verify that
the resulting paths lie in $P(d)$ it suffices to check that (L2) and (L3)
hold at the common point after the swap.

\begin{itemize}
    \item $a_{k+1} = E$ and $b_{k+1} = N$ (Case (1) of \ref{squ})

    \begin{itemize}
       \item $u_k+v_{k+1} \leq k+1$ and $v_k+u_{k+1} \leq k+1$
    
    Then define $P'$ and $Q'$ by swapping the parts of $P$ and $Q$ 
    after the $k$\textsuperscript{th} step. The assumption imply that 
    (L3) is satisfied for $(P',Q')$. Moreover, 
    $(P',Q')$ is a pair of paths in 
    $P(d,i,j) \times P(d,i+1,j+1)$.
    
          \item $u_k+v_{k+1} > k+1$ or $v_k+u_{k+1} >k+1$.
          
  Here we replace the step $(a_k,u_k)
  = (E,u_k)$ by $(E,k+1-v_k)$, and replace the step $(b_k,v_k) = (N,v_k)$ by $(N,k+1-u_k)$.
  Since we have that at least one of $u_k+v_{k+1}> k+1$ and $v_k+u_ {k+1} >k+1$ holds, it follows from $u_k \geq u_{k+1}$ and $v_k\geq v_{k+1}$ that
  $u_k > k+1-v_k$ and hence $v_k > k+1-u_k$. Therefore, the replacement decreases the labels. This
  implies that (L2) and (L3) are preserved in $P$ for the steps
  $(a_{k-1},u_{k-1})$, $(a_k,u_k)$ and in $Q$ for the steps 
  $(b_{k-1},v_{k-1})$, $(b_k,v_k)$.
  
  After the replacement, swap the parts of paths of $P$ and $Q$ after the $k$\textsuperscript{th} step to obtain $(P',Q')$.
  We get one path with step $(E,k+1-v_k)$ followed by $(N,v_{k+1})$, which fulfills (L3) by $k+1-v_k+v_{k+1} \leq k+1$,
  and one path with step $(N,k+1-u_k)$ followed by $(E,u_{k+1})$, which fulfills (L3) by $k+1-u_k+u_{k+1} \leq k+1$.
  
  By \eqref{des} and \eqref{last} the pair $(P',Q')$ is a pair of 
  paths in $P(d,i,j) \times P(d,i+1,j+1)$.
  \footnote{We cannot just swap the parts of $P$ and $Q$ after the $k$\textsuperscript{th} step and get a pair of valid paths.
  Indeed, this would lead to one path with step $(E,k+1-v_k)$ followed by $(E,u_{k+1})$, and one path with step $(N,k+1-u_k)$ followed
  by $(N,v_{k+1})$. One of them would violate (L2) since $k+1-v_k<u_{k+1}$ or 
  $k+1-u_k<v_{k+1}$ in this case.}
    \end{itemize}

      \item $a_{k+1} = N$ and $b_{k+1} = E$ (Case (2) of \ref{squ})
    
    In this situation (L3) implies that
    $u_k + u_{k+1} \leq k+1$ and $v_k+ v_{k+1}\leq k+1$.
  
    \begin{itemize}
        \item $u_k \geq v_{k+1}$ and $v_k \geq u_{k+1}$

        In this case the paths $P'$ and $Q'$ obtained from $P$ and $Q$
        by swapping after the $k$\textsuperscript{th} step satisfy (L2) and hence 
        setting $\Phi_k\big(\,(P,Q)\,\big) = (P',Q')$ yields a valid pair in 
        $P(d,i,j) \times P(d,i+1,j+1)$.
        
    \end{itemize}
    
  \item $a_{k+1} = b_{k+1} = N$ (Case (3) of \ref{squ})

  In this situation (L2) and (L3) imply that
  $u_k+u_{k+1}\leq k+1$ and $v_k\geq v_{k+1}$.

  \begin{itemize}
    \item $u_k+v_{k+1}\leq k+1$ and $v_k\geq u_{k+1}$
    
    We 
    obtain $(P',Q')$ by swapping the parts of $P$ and $Q$ after the $k$\textsuperscript{th} step.
    The pair $(P',Q')$ satisfies (L2) and (L3) after the $k$\textsuperscript{th} step and 
    is a pair paths in 
    $P(d,i,j) \times P(d,i+1,j+1)$.
  
    \item $u_k+v_{k+1}> k+1$
    
    We replace the step $(a_k,u_k) = (E,u_k)$ 
    by $(E,k+1-v_k)$, and replace the step  $(b_k,v_k) = (N,b_k)$ by $(N,k+1-u_k)$.
    The assumption and $v_k \geq v_{k+1}$ imply that $k+1-v_k\leq k+1-v_{k+1} < u_k$. This
    implies that (L2) and (L3) are preserved in $P$ for the steps
    $(a_{k-1},u_{k-1})$, $(a_k,u_k)$ and in $Q$ for the steps 
    $(b_{k-1},v_{k-1})$, $(b_k,v_k)$.

    Now swap the parts of paths after the $k$\textsuperscript{th} step.
    We get one path with step $(E,k+1-v_k)$ followed by $(N,v_{k+1})$, 
    which by $k+1-v_k+v_{k+1} \leq k+1$ satisfies (L3)
    and one path with step $(N,k+1-u_k)$ followed by $(N,u_{k+1})$, which by $k+1-u_k \geq u_{k+1}$ satisfied (L2).
    By \eqref{des} and \eqref{last} the pair $(P',Q')$ is a pair of paths in $P(d,i,j) \times P(d,i+1,j+1)$.
    \footnote{We cannot swap the parts of $P$ and $Q$ after the $k$\textsuperscript{th} step and get a pair of valid paths,
    because we could get an $NN$ path with labels $k+1-u_k$ and $v_{k+1}$ which violates (L2) since $k+1-u_k<v_{k+1}$. }

  \item $v_k < u_{k+1}$ 
  
    It follows that $v_{k+1} \leq v_k < u_{k+1}$.
    Set $s = k+1$ and let $t$ be the maximal index such that
    $a_s = b_s,\ldots, a_t= b_t$.

    If there is an $s \leq r < t$ such that (A$_r$), (B$_r$) or (C$_r$)
    is satisfied then choose the smallest such $r$. 
    We then obtain $(P',Q')$ by swapping the parts of $P$ and $Q$ after the $r$\textsuperscript{th} step. The respective condition (A$_r$), (B$_r$) or (C$_r$) then
    immediately translates into (L2) and (L3) for
    $P'$ and $Q'$ after the $r$\textsuperscript{th} step.

    Now assume there is no $s \leq r < t$ 
    such that one of (A$_r$), (B$_r$) or (C$_r$) 
    holds. By $v_s < u_s$ we can apply \ref{lem:end} and deduce that
    $t < d$. 

    Again we need to distinguish cases.
    First assume that $a_t = b_t = N$ which then implies $u_t > v_t$. 

    If $a_{t+1} = E$ and $b_{t+1} = N$ then by (L2) and (L3) we have
    $u_t+ u_{t+1} \leq t+1$ and $v_t \geq v_{t+1}$. 
    Hence $v_t+u_{t+1} < u_t+u_{t+1} \leq t+1$ and 
    $u_t > v_t \geq v_{t+1}$. This implies that the paths
    $P'$ and $Q'$ obtained from $P$ and $Q$ by swapping the parts after the
    $t$\textsuperscript{th} step satisfy (L2) and (L3) after the $t$\textsuperscript{th} step. 

    The remaining case is that $a_t = b_t = E$ which then implies $u_t < v_t$.
    If $a_{t+1} = E$ and $b_{t+1} = N$ then by (L2) and (L3) we 
    have $u_t \geq u_{t+1}$ and $v_t+v_{t+1} \leq t+1$.
    Hence $v_t > u_t \geq u_{t+1}$ and $u_t+v_{t+1} < v_t + v_{t+1} 
    \leq t+1$. This implies that the paths
    $P'$ and $Q'$ obtained from $P$ and $Q$ by swapping the parts after the
    $t$\textsuperscript{th} step satisfy (L2) and (L3) after the $t$\textsuperscript{th} step.
  \end{itemize}
  \item $a_{k+1} = b_{k+1} = E$ (Case (4) of \ref{squ})
  
  In this situation (L2) and (L3) imply that
  $u_k \geq u_{k+1}$ and $v_k+ v_{k+1}\leq k+1$.
  This dual to the situation from Case (2) of \ref{squ}) and the
  same arguments and definitions apply here.

\end{itemize}

\noindent {\sf Case:} $k = d$ 

By assumption $a_d = E$ and $b_d = N$. 
Hence the $d$\textsuperscript{th} step of $P$ is
$(E,j+2)$ and the $d$\textsuperscript{th} step of $Q$ is 
$(N,d-j)$.
We set $\Phi_k(P,Q)=(P',Q')$, where $P'$ is obtained from $P$ by replacing the $d$\textsuperscript{th} step with
$(E,j+1)$ and $Q'$ is obtained from $Q$ by replacing the $d$\textsuperscript{th} step with $(N,d-j-1)$. 
The label of the $d$\textsuperscript{th} step in $P'$ is smaller than the one in $P$ and the label of the $d$\textsuperscript{th} step in 
$Q$ is smaller than the one in $Q$. It follows that (L2) and (L3) will not be violated by the
relabeling and hence by \eqref{des} and \eqref{last} $P' \in P(d,i,j)$ and $Q' \in P(d,i+1,j+1)$. 

\bigskip

Thus we have defined $\Phi_k$ except in the following
situation:

\noindent {(U)~}
         There is a $k \leq t <d$ such that $P$ and $Q$ meet after the
         $k$\textsuperscript{th} step and
         \begin{itemize}
             \item $u_k < v_{k+1}$ or $v_k < u_{k+1}$ and
             \item $a_k = E$, $b_k = N$ and
             \item $a_{k+1} = b_{k+1}, \ldots, a_t = b_t$ and 
             \item $a_{t+1} = N$, $b_{t+1} = E$.
         \end{itemize}

\begin{figure}[ht]
\centering
\begin{tikzpicture}
\draw (-1,0)--(1,0);
\draw [dashed,color=red](0,-1)--(0,1);
\draw (2,0)--(3,0)--(3,1);
\draw[dashed,color=red] (3,-1)--(3,0)--(4,0);
\draw (5,0)--(6,0)--(6,1);
\draw[dashed,color=red] (6,-1)--(6,1);
\draw (8,0)--(9,0)--(10,0);
\draw[dashed,color=red] (9,-1)--(9,0)--(10,0);
\fill (0,0) circle(0.05); \fill (3,0) circle(0.05); \fill (6,0) circle(0.05); \fill (9,0) circle(0.05);
\foreach \x in {-.5,2.5,5.5,8.5}
\foreach \y in {0}
\node at(\x,\y)[above,font=\scriptsize] {$(E,u_k)$};
\foreach \x in {0.7,9.8}
\foreach \y in {0}
\node at(\x,\y)[above,font=\scriptsize] {$(E,u_{k+1})$};
\foreach \x in {3,6}
\foreach \y in {0.7}
\node at(\x,\y)[left,,font=\scriptsize] {$(N,u_{k+1})$};
\foreach \x in {-1,2,5,8}
\foreach \y in {0}
\node at(\x,\y)[left] {P};

\foreach \x in {0,3,6,9}
\foreach \y in {-0.75}
\node at(\x,\y)[right,font=\scriptsize,color=red] {$(N,v_k)$};
\foreach \x in {0,6}
\foreach \y in {0.7}
\node at(\x,\y)[right,font=\scriptsize,color=red] {$(N,v_{k+1})$};
\foreach \x in {3.8,9.8}
\foreach \y in {0}
\node at(\x,\y)[below,font=\scriptsize,color=red] {$(E,v_{k+1})$};
\foreach \x in {0,3,6,9}
\foreach \y in {-1}
\node at(\x,\y)[below] {Q};

\foreach \x in {0,3,6,9}
\foreach \y in {0}
\node at(\x,\y)[right=0.2cm,below,font=\small] {X};

\node at(0,-2) {(1)};
\node at(3,-2) {(2)};
\node at(6,-2) {(3)};
\node at(9,-2) {(4)};

\end{tikzpicture}
\caption{Four cases in the first intersection}
\label{squ}
\end{figure}

\begin{proposition}
\label{prop:injection}
  For $d \geq 1$ and $0 \leq i,j \leq d-1$ there is an injection
  $$\Phi:P(d,i,j+1)\times P(d,i+1,j)\rightarrow P(d,i,j) \times P(d,i+1,j+1).$$
\end{proposition}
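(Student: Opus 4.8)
The plan is to assemble the partially defined map $\Phi_k$ into a single injection by performing the swap at the \emph{first} intersection of $P$ and $Q$ at which one of the scenarios handled above applies, reading case (U) as the instruction ``pass to the next intersection''.

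So let $(P,Q)\in P(d,i,j+1)\times P(d,i+1,j)$ and place $P$ with initial point $(-1,1)$ and $Q$ with initial point $(0,0)$ as before; since $P$ has $i$ and $Q$ has $i+1$ steps $N$, both paths end at $(d-i-1,i+1)$, and as $P$ starts strictly higher they intersect. Let $k_1$ be the first intersection index. If the configuration at $k_1$ is one of those for which $\Phi_{k_1}$ has been defined (a handled subcase of one of Cases (1)--(4), or $k_1=d$), set $\Phi(P,Q):=\Phi_{k_1}(P,Q)$. Otherwise we are in case (U), with the associated index $t_1$; after step $t_1+1$ the two paths are again in the mutual position they had at the start ($P$ above and to the left of $Q$), so, as they share an endpoint, they intersect once more at some index $k_2>t_1+1$, and we repeat the analysis at $k_2$. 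The indices $k_1<k_2<\cdots$ strictly increase and are at most $d$, and at index $d$ the scenario ``$k=d$'' always applies (case (U) cannot occur there); hence the iteration stops at some index $k$ for which $\Phi_k$ is defined, and we set $\Phi(P,Q):=\Phi_k(P,Q)$. That $\Phi(P,Q)$ lies in $P(d,i,j)\times P(d,i+1,j+1)$ is exactly what was verified in each handled scenario, and passing through case (U) leaves the paths untouched; so $\Phi$ is a well-defined map into the right set.

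It remains to prove injectivity, which I would do by constructing a left inverse. Write $(P',Q')=\Phi(P,Q)$ and let $k$ be the operative index above. The move $\Phi_k$ changes only step $k$ of $P$ and of $Q$ --- and even there only the label, never the direction --- and swaps the tails after step $k$; hence steps $1,\dots,k-1$ of $P'$ and $Q'$ coincide with those of $P$ and $Q$ (directions and labels alike), while each earlier intersection $k_\ell$ and its companion index $t_\ell$ satisfy $t_\ell+1<k$, so the whole data deciding the sequence of (U)-passes is read off from these unchanged initial segments. The analogous ``unswapping'' procedure on $(P',Q')$ --- place $P'$ at $(-1,1)$ and $Q'$ at $(0,0)$ (legitimate since $P'$ and $Q'$ again carry $i$ and $i+1$ steps $N$ respectively), find the first intersection, skip exactly the (U)-passes, and undo a relabel-and-swap at the operative index --- therefore meets the same intersections and the same (U)-passes, and reaches the same index $k$; undoing the tail swap there is immediate, and one recovers whether the label of step $k$ was rescaled, and if so its original value, by rereading the inequality defining the relevant subcase in terms of the labels visible in $(P',Q')$ after the swap. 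This returns $(P,Q)$, so $\Phi$ is injective.

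I expect the main obstacle to be precisely this last reconstruction: one has to check that at the operative index the listed subcases are exhaustive and have pairwise disjoint images, that the image $(P',Q')$ is never in case (U) at the operative index for the reverse procedure (so that index is not skipped on the way back), and that each rescaling of a label is invertible from the data visible after the swap --- in short, that the bookkeeping of the relabelings closes up. All the other ingredients --- well-posedness of the iterative definition, its termination, and membership of the output in $P(d,i,j)\times P(d,i+1,j+1)$ --- are already contained in the case analysis preceding the statement, together with the elementary remark that after a (U)-configuration the two paths resume their initial relative position.
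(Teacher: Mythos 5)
Your assembly of $\Phi$ from the partial maps $\Phi_k$ is exactly the intended construction: take the first intersection at which one of the handled scenarios applies, read (U) as ``pass to the next intersection'' (after a (U)-configuration the paths separate again with $P$ above $Q$, so the next intersection again has $a_{k'}=E$, $b_{k'}=N$), and observe that the scenario $k=d$ is always handled, so the procedure terminates and lands in $P(d,i,j)\times P(d,i+1,j+1)$ by the case-by-case verifications. That part matches the paper. The problem is injectivity: you propose a left inverse (``unswap at the operative index, undoing the relabeling if there was one''), but you then explicitly defer the three facts on which this hinges --- that the operative index of $(P,Q)$ can be located from $(P',Q')$ alone, that one can decide from $(P',Q')$ whether a plain swap or a relabel-then-swap was performed there, and that the subcases have pairwise disjoint images. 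These are not routine bookkeeping; they are the actual mathematical content of the injectivity claim, and leaving them as an expected ``main obstacle'' means the central step of the proposition is not proved. (Recovering $u_k,v_k$ from the relabeled values $k+1-v_k$, $k+1-u_k$ is trivial \emph{once} you know a relabeling happened at step $k$; the genuine difficulty is knowing that, and knowing $k$.)

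For comparison, the paper does not build an explicit inverse but argues by contradiction on two preimages with the same image: if both images arise from (relabeled or plain) swaps at different indices, decomposing $P=P_1P_2P_3$, $Q=Q_1Q_2Q_3$ around the two swap points and comparing the concatenations shows that the pair with the later swap point could already have swapped at the earlier point, contradicting the minimality of the swap index built into the definition of $\Phi$; and a plain swap and a relabel-then-swap cannot yield the same image because, as recorded in the footnotes to Cases (1) and (3), swapping without relabeling in those configurations produces a label pattern violating (L2), so the labels visible in the common image are incompatible with one of the two hypothesized preimage types. Your reconstruction strategy could in principle be completed, but it would require precisely this kind of analysis (re-reading the defining inequalities of each subcase off $(P',Q')$ and checking the outcomes are mutually exclusive and exhaustive at the operative index); as written, the proof has a genuine gap at injectivity.
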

\begin{proof}
  We define the map $\Phi$ using the maps $\Phi_k$ defined before. 
  
  If $P$ and $Q$ do not intersect before reaching the endpoint then 
  $\Phi_d$ is defined and we set $\Phi((P,Q)) = \Phi_d((P,Q))$. 
  
  Now we consider the situation that $P$ and $Q$ intersect before
  reaching the end point. This implies that there is a $k < d$ such that
  $P$ and $Q$ meet after the $k$\textsuperscript{th} step and $a_k = E$, $b_k = N$.
  If $\Phi_k$ is not defined then we are in situation (U). In particular, this
  implies that if $k' > k$ is minimal such that $P$ and $Q$ intersect
  after the $k'$\textsuperscript{ th} step then $a_{k'} = E$ and $b_{k'} = N$. 
  Thus either there is a minimal $k < d$ such that $\Phi_k\big(\,(P,Q)\,\big)$ is
  defined or $\Phi_d\big(\,(P,Q)\,\big)$ is defined. We set $\Phi\big(\,(P,Q)\,\big)$ to 
  $\Phi_k\big(\,(P,Q)\,\big)$ or $\Phi_d\big(\,(P,Q)\,\big)$ respectively. 

  By the arguments define the $\Phi_k$ we now have defined a map
  $\Phi:P(d,i,j+1)\times P(d,i+1,j)\rightarrow P(d,i,j) \times P(d,i+1,j+1)$. 
  It remains to show that $\Phi$ is injective.

Let $\Phi\big(\,(P,Q)\,\big)= (P',Q')$. If $P$ and $Q$ only have the endpoint in common
then so do $P'$ and $Q'$. In particular, $P'$ and $Q'$ arose from 
relabeling the $d$\textsuperscript{th} steps which can be reversed and hence $(P,Q)$ is
determined by $(P',Q')$. If follows that $\Phi$ is injective in this
situation.

Next we consider the case that $P$ and $Q$ intersect before the endpoint. 
Assume that $\Phi\big(\,(P,Q)\,\big) = (P',Q') = \Phi\big(\,(\bar{P},\bar{Q})\,\big)$ and in both 
cases $\Phi$ is defined via a simple swap. 
If the swap points for $(P,Q)$ and $(\bar{P},\bar{Q})$ coincide then it is easy to check that
$\bar{P} = P$ and $\bar{Q} = Q$.

By symmetry we can assume that the swap point for $(P,Q)$ precedes the swap point for $(\bar {P},\bar{Q})$. Then we can write $P$ as a concatenation of $P_1P_2P_3$
and $Q$ as $Q_1Q_2Q_3$ where $P_1$, $Q_1$ are the parts from the start to
the swap point of $(P,Q)$, $P_2,Q_2$ the part from that point till the 
swap point of $(\bar{P},\bar{Q})$ and $P_3,Q_3$ the remaining paths.
We apply the same convention to $\bar{P}$ and $\bar{Q}$. 
By $\Phi((P,Q)) = (P',Q') = \Phi((\bar{P},\bar{Q}))$ it follows
that $P' = P_1Q_2Q_3 = \bar{P}_1\bar{P}_2\bar{Q}_3$ and
$Q' = Q_1P_2P_3 = \bar{Q}_1\bar{Q}_2\bar{P}_3$. It follows that,
$P_1 = \bar{P}_1$, $P_2 = \bar{Q_2}$, $P_3 = \bar{P_3}$,
$Q_1 = \bar{Q_1}$
$Q_2 = \bar{P_2}$, $Q_3 = \bar{Q_3}$.
Then $\bar{P} = P_1Q_2P_3$, $\bar{Q} = Q_1P_2Q_3$.
This shows that $\bar{P}$ and $\bar{Q}$ can swap at the same swap point 
in which $(P,Q)$ swap, but this contradicts the fact that
in the construction $\Phi$ the earliest possible swap point is used. 
It follows that $\Phi$ is injective when restricted to pairs $(P,Q)$
for which $\Phi((P,Q))$ is defined by a swap.

Now assume $\Phi\big(\,(P,Q)\,\big) = (P',Q') = \Phi\big(\,(\bar{P},\bar{Q})\,\big)$ in both cases $\Phi$ is defined by a swap after a relabeling. 
In this case an argument analogous to the the one for a simple swap
show that $(P,Q) = (P',Q')$. 

It remains to consider the case where $\Phi\big(\,(P,Q)\,\big) = (P',Q') = \Phi\big(\,(\bar{P},\bar{Q})\,\big)$ and $\Phi\big(\,(P,Q)\,\big)$ is defined by a simple swap and $\Phi\big(\,(\bar{P},\bar{Q})\,\big)$ is defined by a swap after a relabeling. 
Assume the swap happens after the $k$\textsuperscript{th} step and relabeled swap after the 
$k'$\textsuperscript{ th} step. For sure $k \neq k'$. If $k < k'$ then the labels of the 
$k'$\textsuperscript{ th} step is $P$ and $Q$ coincide with one's after relabeling 
and before swapping $\bar{P}$ and $\bar{Q}$. By the footnote on 
preceding page this cannot happen since then $\bar{P}$ and $\bar{Q}$
would not be valid paths. 
If $k' < k$ then the same argument can be applied to the $k'$\textsuperscript{th} step. 

Now we have covered all cases and it follows that $\Phi$ is injective.

\end{proof}

\begin{proof}[Proof of \ref{tp}]
  By \ref{baryh} we need to consider $H_\F = \big(\, A(d,i,j)\,\big)_{0 \leq i ,j \leq d-1}$. 
  If a minor involves the first of last row of $H_F$ then a simple
  inspection using \eqref{eq:posentry} shows that the minor is non-negative. 
  For the remaining cases, again \eqref{eq:posentry} shows that all entries in the minors are strictly positive. Hence we can apply \ref{lem:consecutive} and the non-negativity of any minor of order $2$ can be deduced from the non-negativity of all minors of $2$ consecutive rows and $2$ consecutive columns. Hence in order to show that $H_{\F}$ is TP$_2$, it suffices to show that
  $$ A(d,i,j+1)\cdot  A(d,i+1,j)\leq  A(d,i,j)\cdot  A(d,i+1,j+1),
\text{ for }  0\leq i,j\leq d-1.$$
  This inequality follows directly from \ref{prop:injection}.
\end{proof}

We close this section with another interesting property of the matrix $H_\F$
for $\F$ the barycentric subdivision.

\begin{theorem} \label{thm:inverse}
Let $\F$ be the barycentric subdivision of $(d-1)$-dimensional simplicial complexes and let 
 $P_j(x)$ be the generating polynomial of the $j$\textsuperscript{th} column of $H_{\F}^{-1}$, where $0\leq j\leq d$.
 Then  
 \begin{align}\label{gp}
    P_j(x)=\frac{1}{d!}\prod_{k=1}^{d-j-1}(-kx+k+1)\cdot\prod_{k=0}^{j-1}((k+1)x-k).
 \end{align}
\end{theorem}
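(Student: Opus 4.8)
The plan is to identify the column generating polynomials of $H_\F$ itself and then invert. By \ref{baryh}, the $(i,j)$ entry of $H_\F$ (with rows and columns indexed $0,\dots,d-1$ in the $(d-1)$-dimensional case, so we work with $d\to d+1$, i.e. permutations in $S_{d+1}$) is $A(d+1,i,j)$, the number of $\sigma\in S_{d+1}$ with $\des(\sigma)=i$ and $\sigma(d+1)=d+1-j$. So the generating polynomial of column $j$ of $H_\F$ is $C_j(x)=\sum_i A(d+1,i,j)\,x^i$, the descent polynomial over permutations with a prescribed last letter. There is a classical product formula for these refined Eulerian polynomials: conditioning on the position $\sigma^{-1}(d+1)$ of the largest letter, or using the $P(d)$-path model from this section, one gets that $C_j(x)$ factors through the known relation for restricted descent statistics. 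The first step is thus to write $H_\F$ as a product of elementary bidiagonal-type matrices (or, equivalently, to express $C_j(x)$ in terms of the basis $\{x^k(1-x)^{\,\ell}\}$), so that $H_\F^{-1}$ can be read off.

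Concretely, I would proceed as follows. First, establish the column generating function identity for $H_\F$: using the bijection $\Psi$ and the path model, show $\sum_j C_j(x)\,y^j$ or rather the matrix-level statement that $H_\F$ transforms the basis dual to $\{(k+1)x-k\}$ and $\{-kx+k+1\}$ nicely. Second, observe that the claimed polynomials $P_j(x)$ on the right of \eqref{gp} are, up to the normalization $1/d!$, products of linear factors of the two families $\ell_k^{+}(x)=(k+1)x-k$ and $\ell_k^{-}(x)=-kx+k+1$; note $\ell_0^{+}(x)=x$, $\ell_0^{-}(x)=1$, and that $\ell_k^{-}(x)=\ell_k^{+}(1-x)\cdot(-1)$ up to sign, reflecting the symmetry $h_{ij}=h_{d-i,d-j}$ from the first Proposition. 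Third, verify the inverse relation directly: show that the matrix $M$ whose $j$-th column has generating polynomial $P_j(x)$ satisfies $H_\F M = I$. This is where the combinatorial identity enters — expanding $H_\F M$ in the column index amounts to an Eulerian-type summation
\begin{align*}
  \sum_{j} A(d+1,i,j)\,[x^i]\,P_j(x) = \delta_{ii'},
\end{align*}
which should follow from a generating-function manipulation: multiply by $x^i$, sum over $i$, and recognize a telescoping product of the linear factors $\ell_k^{\pm}$. Fourth, handle the normalization and the boundary columns $j=0$ and $j=d$ separately, where one of the two products in \eqref{gp} is empty; here $P_0(x)=\frac{1}{d!}\prod_{k=1}^{d-1}(-kx+k+1)$ and $P_d(x)=\frac{1}{d!}\prod_{k=0}^{d-1}((k+1)x-k)$, and these must match the first and last columns of $H_\F^{-1}$, which can be checked against \eqref{eq:posentry} (the first and last rows/columns of $H_\F$ are nearly trivial).

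The main obstacle I expect is the second-to-third step: proving the column generating function of $H_\F$ has the precise reciprocal-product shape that makes inversion a clean telescoping. The path model of this section gives a combinatorial handle on $A(d+1,i,j)$ but not obviously on its $x$-generating function in factored form; I would likely need either an explicit recursion $A(d+1,i,j)$ in terms of $A(d,\cdot,\cdot)$ (peeling off the last letter, which changes the descent count by $0$ or $1$ depending on whether position $d$ becomes a descent) to set up an induction on $d$, or a direct evaluation $C_j(x)=\frac{1}{?}\sum$ via the classical fact that $\sum_{\sigma\in S_{n}:\sigma(n)=m}x^{\des\sigma}$ is a product of the form $\prod(\text{linear})$. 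Once that factored form is in hand, the verification $H_\F H_\F^{-1}=I$ reduces to the Vandermonde-like identity that a product of the $\ell_k^{+}$'s and $\ell_k^{-}$'s interpolates correctly, and the $1/d!$ is forced by evaluating at $x=1$ (where every row sum of $H_\F$ equals $(d+1)!/\,$something, reflecting that barycentric subdivision of the $(d-1)$-simplex has $(d+1)!$ $\,$or $d!$ facets). I would present the induction on $d$ as the cleanest route, with the base case $d=1$ immediate and the inductive step amounting to multiplying the degree-$(d-1)$ product in \eqref{gp} by one more linear factor from each family while tracking how the last-letter statistic shifts.
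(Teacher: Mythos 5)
There is a genuine gap at exactly the step you flag as the main obstacle, and it cannot be repaired in the form you propose. Your verification of $H_\F M=I$ rests on the ``classical fact'' that the restricted descent polynomials $C_j(x)=\sum_i A(d+1,i,j)\,x^i$ (the columns of $H_\F$, i.e.\ $\sum_{\sigma\in S_n,\ \sigma(n)=m}x^{\des(\sigma)}$) factor into linear pieces. This is false: already for $n=4$ and last letter $m=1$ one gets $x+4x^2+x^3=x\,(x^2+4x+1)$, whose quadratic factor has irrational roots. These polynomials are real-rooted, but they are not products of rational linear factors, so the ``reciprocal-product shape'' and the clean telescoping/Vandermonde verification you want never materialize. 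The remaining ingredients of your sketch (the recursion $A(d+1,i,j)\to A(d,\cdot,\cdot)$ with induction on $d$, the treatment of the boundary columns, the normalization $1/d!$ via evaluation at $x=1$) are announced rather than executed, so as it stands the identity \eqref{gp} is not proved, and the one concrete tool you invoke to close the argument does not exist.

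For comparison, the paper sidesteps the columns of $H_\F$ entirely by using the factorization $H_\F=A_{d+1}B_{d+1}A_{d+1}^{-1}$ from \cite{BW08}, where $A_{d+1}=\bigl[(-1)^{i+j}\binom{d-j}{i-j}\bigr]$ is a signed Pascal-type matrix and $B_{d+1}=[\,i!\,S(j,i)\,]$ is a Stirling matrix. With this, \eqref{gp} becomes the single polynomial identity $(1,x,\dots,x^d)A_{d+1}=(P_0(x),\dots,P_d(x))A_{d+1}B_{d+1}$, whose left-hand side is the basis $\bigl(x^i(1-x)^{d-i}\bigr)_i$; the right-hand side is computed in two claims, one proved by evaluating both sides at the rational points $x=a/(a+1)$ and using binomial identities, the other by the Stirling identity $\sum_k S(n,k)\,x(x-1)\cdots(x-k+1)=x^n$. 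If you want to salvage your plan, replace the (nonexistent) product formula for the columns of $H_\F$ by this conjugation: it is precisely the structural input that converts the inversion of $H_\F$ into manipulations of the factored linear families $(k+1)x-k$ and $-kx+k+1$ appearing in \eqref{gp}.
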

\begin{proof}
By Lemma 1 and Theorem 1 from \cite{BW08} the matrix $H_\F$ can be written 
as a product
\begin{align}\label{hd}
  H_\F=A_{d+1}B_{d+1}A_{d+1}^{-1}, 
 \end{align}
 where 
 $$A_{d+1}=\left[(-1)^{i+j}\binom{d-j}{i-j}\right]_{0\leq i,j\leq d},\ \ A_{d+1}^{-1}=\left[\binom{d-j}{i-j}\right]_{0\leq i,j\leq d}$$ and 
 $$B_{d+1}=[i!S(j,i)]_{0\leq i,j\leq d},$$ 
 where $S(j,i)$ is the Stirling number of the second kind.
 Using \eqref{hd}, to show $$(1,x,\dots, x^d)H_\F^{-1}=(P_0(x),P_1(x),\dots, P_d(x)),$$
 it suffices to prove that 
 \begin{align}\label{main}
   (1,x,\dots, x^d)\,A_{d+1}= \big(\,P_0(x),P_1(x),\dots, P_d(x)\,\big)\,A_{d+1}\,B_{d+1}.
 \end{align}
 By the definition of $A_{d+1}$, it is easy to check that 
  $$(1,x,\dots, x^d)\,A_{d+1}=\big(\,(1-x)^d,x(1-x)^{d-1},\dots,x^i(1-x)^{d-i},\dots,x^d\,\big).$$
  We claim the following for the right hand side of \eqref{main}:
  
  \begin{claim}\label{claim1}
  $\big(\,P_0(x),P_1(x),\dots, P_d(x)\,\big)A_{d+1}=\big(\,f_0(x),\dots,f_d(x)\,\big),$
  where $$f_i(x)=\frac{1}{i!}(1-x)^{d-i}x(2x-1)\cdots(ix-i+1).$$
  \end{claim}
  
  \begin{proof}[Proof of \ref{claim1}]
    By the definition of $A_{d+1}$, we have 
    $f_i(x)=\displaystyle{\sum_{j=i}^{d}P_j(x)(-1)^{d-j}\binom{d-i}{j-i}}$.
    Then all we need to prove is the following identity:
    \begin{align}\label{fi}
     \sum_{j=i}^{d}P_j(x)(-1)^{d-j}\binom{d-i}{j-i}=\frac{1}{i!}(1-x)^{d-i}x(2x-1)\cdots(ix-i+1).
   \end{align}
   Take $x=\frac{a}{a+1}$ for $0\leq a\leq d-1$.
   Then 
   \begin{align*}
\text{Left hand side of } \eqref{fi} &=\sum_{j=i}^{d}P_j\big(\,\frac{a}{a+1}\,\big)(-1)^{d-j}\binom{d-i}{j-i}\\
&=\frac{1}{d!(a+1)^d}\sum_{j=0}^{a-i}(a+d-i-j)_{d}(-1)^j\binom{d-i}{j}\\
&=\frac{1}{(a+1)^d}\sum_{j=0}^{a-i}\binom{a+d-i-j}{d}(-1)^j\binom{d-i}{j}\\
&=\frac{1}{(a+1)^d}\binom{a}{i},
\end{align*}
where $(x)_{k}=x(x-1)\cdots(x-k+1)$ is the lower factorial.
The last equality holds since
$$\sum_{j=0}^{m}\binom{m+d-j}{m-j}(-1)^j\binom{d-i}{j}=\binom{m+i}{m}.$$
\begin{align*}
\text{Right hand side of } \eqref{fi} &=\frac{1}{i!}\frac{1}{(a+1)^{d-i}}\frac{(a)_i}{(a+1)^i}
=\frac{1}{(a+1)^d}\binom{a}{i}
=\text{Left hand side of } \eqref{fi}.
\end{align*}
 It follows that the polynomials on both sides of \eqref{fi} evaluate to
 the same number for  
 $d+1$ different arguments. By the fact that both polynomials have degree $d$ this completes the proof of the claim.
  \end{proof}
  
  \begin{claim}
  $\displaystyle{\sum_{i=0}^{d-1}f_i(x)i!S(j,i)=x^j(1-x)^{d-j}}$.
  \end{claim}
  \begin{proof}
  By \ref{claim1},
  \begin{align*}
  \sum_{i=0}^{d-1}f_i(x)i!S(j,i)&=
  \sum_{i=0}^{d-1}\frac{1}{i!}(1-x)^{d-i}x(2x-1)\cdots(ix-i+1)i!S(j,i)\\
 & =(1-x)^d\sum_{i=0}^{d-1}x(2x-1)\cdots(ix-i+1)(1-x)^{-i}S(j,i)\\
  &=(1-x)^d\sum_{i=0}^{d-1}(\frac{1}{1-x}-1)\cdots(\frac{1}{1-x}-i)S(j,i).
  \end{align*}
  Note that $$\sum_{k=0}^{n}S(n,k)x(x-1)\cdots(x-k+1)=x^n.$$
  We have 
  $$\sum_{i=0}^{d-1}(\frac{1}{1-x}-1)\cdots(\frac{1}{1-x}-i)S(j,i)=(\frac{1}{1-x}-1)^j,$$
  which completes the proof.
  \end{proof}
  
  Combining the two claims, we get that \eqref{main} holds.
\end{proof}

\section*{Appendix: Some well known facts about TP\texorpdfstring{$_2$}{} matrices}

For a matrix $A \in \RR^{n \times m}$, row indices $1 \leq i_1 <i_2 \leq n$ and
column indices $1 \leq j_1 < j_2 \leq m$ we denote by 
$A^{i_1,i_2}_{j_1,j_2}$ the submatrix of $A$ obtained by selecting
    rows $i_1$ and $i_2$ and columns $j_1$ and $j_2$.
    
\begin{lemma} \label{lem:consecutive}
    Let $A = (\,a_{ij}\,)_{\genfrac{}{}{0pt}{}{1 \leq i \leq n}{1 \leq j \leq m}} \in \RR^{n \times m}$ such that $a_{ij} > 0$ for $1<i<n$ and $1<j<m$.  
    Then $A$ is TP$_2$ if and only if 
    all $2 \times 2$-minors of consecutive rows and columns are non-negative.
\end{lemma}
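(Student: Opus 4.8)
The plan is to prove both directions, with the forward direction (TP$_2$ implies all $2\times 2$ minors of consecutive rows/columns are non-negative) being trivial since such minors form a subset of all $2\times 2$ minors. The substance is the converse: assuming all $2\times 2$ minors built from two consecutive rows and two consecutive columns are non-negative, together with the strict positivity of the ``interior'' entries $a_{ij}>0$ for $1<i<n$, $1<j<m$, deduce that \emph{every} $2\times 2$ minor $a_{i_1j_1}a_{i_2j_2}-a_{i_1j_2}a_{i_2j_1}$ with $i_1<i_2$ and $j_1<j_2$ is non-negative. First I would reduce to the case of arbitrary rows but consecutive columns (or vice versa) by a telescoping argument along one index at a time, so that the crux becomes: if the minors of consecutive columns are non-negative for all pairs of rows, is the same true for arbitrary columns?

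The key step is a multiplicative/telescoping estimate on ratios. Restrict attention first to two fixed rows $i_1<i_2$ that are both interior, so all four relevant entries in any column are positive; the hypothesis on consecutive columns $j,j+1$ says $a_{i_1,j}/a_{i_2,j}\ge a_{i_1,j+1}/a_{i_2,j+1}$, i.e.\ the ratio $r_j:=a_{i_1,j}/a_{i_2,j}$ is weakly decreasing in $j$ across the interior columns. Chaining these inequalities from column $j_1$ to column $j_2$ gives $r_{j_1}\ge r_{j_2}$, which is exactly the desired minor inequality for those two rows and columns $j_1<j_2$, provided both columns lie strictly between $1$ and $m$. The same telescoping handles consecutive rows versus arbitrary rows, so the only remaining work is at the boundary.

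The main obstacle, and the reason the positivity hypothesis is stated only for strictly interior indices, is dealing with the first/last row and first/last column, where an entry may be zero and a ratio may be undefined or a direction of inequality may degenerate. Here I would argue directly: suppose the minor $A^{i_1,i_2}_{j_1,j_2}$ involves, say, row $i_1=1$. If any of the four entries is zero, one inspects the few possibilities: if $a_{1,j_1}=0$ then the minor is $-a_{1,j_2}a_{i_2,j_1}\le 0$ would be bad, so I must instead show $a_{1,j_2}=0$ as well, which follows because the column-ratio monotonicity forces... — more carefully, one uses the non-negativity of the consecutive minors together with positivity of the interior entries to propagate zeros in a controlled way (a zero in an interior position is impossible, so zeros can only occur in boundary rows/columns, and the consecutive-minor condition then pins down their pattern, e.g.\ once a boundary entry vanishes the entries beyond it in that row must vanish too, or the sign of the cross term is forced the right way). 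Assembling these boundary cases with the interior telescoping argument completes the proof; I expect the boundary bookkeeping to be the only genuinely fiddly part, and it is finite and routine once the interior monotonicity is in place.
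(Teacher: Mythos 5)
Your skeleton is in substance the same as the paper's: the forward direction is trivial, and your interior argument (monotonicity of the ratios $a_{i_1,j}/a_{i_2,j}$ across consecutive columns, then chaining) is exactly the paper's mechanism in different clothing --- the paper writes the three-term identity
$a_{i_2,j_2}\det(A^{i_1,i_3}_{j_1,j_2})=a_{i_3,j_2}\det(A^{i_1,i_2}_{j_1,j_2})+a_{i_1,j_2}\det(A^{i_2,i_3}_{j_1,j_2})$
(and its column analogue) and inducts on $i_3-i_1$ and $j_3-j_1$, dividing by the positive ``middle'' entry; dividing by positive entries turns this into precisely your ratio chaining. So the interior part is correct but not a genuinely different route.

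The genuine gap is the boundary analysis, which is exactly where the hypothesis ``$a_{ij}>0$ only for $1<i<n$, $1<j<m$'' has to do its work, and which your write-up leaves unfinished (the sentence ``which follows because the column-ratio monotonicity forces\dots'' trails off) and then dismisses as routine. Concretely: (1) the zero-propagation you invoke (``once a boundary entry vanishes the entries beyond it in that row vanish'') needs, besides non-negativity of the boundary entries (not among the stated hypotheses; it must be made explicit, since only interior entries are constrained), that the adjacent entry in the neighbouring \emph{interior} row is strictly positive, so it only runs across interior columns; (2) at a corner the argument bifurcates: if $a_{1,1}=0$ but $a_{1,2}>0$ you cannot propagate along the first row at all, and instead the consecutive minors on columns $(1,2)$ force the entire first column to vanish --- this alternative branch has to be treated; (3) along the last row and last column the propagation runs in the opposite direction (leftward/upward), so ``beyond it'' is not even the correct description there; (4) when $n=2$ or $m=2$ the interior-positivity hypothesis is vacuous and the chaining has nothing to stand on; e.g.\ for the non-negative matrix $\left(\begin{smallmatrix}0&1\\0&0\\1&0\end{smallmatrix}\right)$ all consecutive $2\times2$ minors vanish while the minor on rows $\{1,3\}$ equals $-1$, which shows that the cases you defer carry real content and cannot be waved through without using positivity somewhere. (The paper's own treatment of the zero entries is also only a one-line remark, but its core induction is complete once positivity of the middle entries is available.) To turn your proposal into a proof you must carry out this zero-propagation and corner case analysis explicitly rather than assert it.
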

\begin{proof}
    If $A$ is TP$_2$ then all $2 \times 2$-minors corresponding to consecutive rows and
    columns must be non-negative.
    
    Now assume that the $2 \times 2$-minors of consecutive rows and columns are non-negative.
    If there exists an entry in the first row (or column) such that  $a_{1j_1}=0$ ( $a_{i_11}=0$) then by the assumption for all $j_2>j_1$, $a_{1j_2}=0$ ($i_2>i_1$, $a_{i_21}=0$).
    
    
    Let $1 \leq i_1 < i_2 < i_2 \leq n$ and $1 \leq j_1 < j_2 < j_3\leq m$ be row and
    column indices.
    By elementary calculations we have 
    $$a_{i_2,j_2} \cdot \det(A^{i_1,i_3}_{j_1,j_2})= a_{i_3,j_2} \cdot \det(A^{i_1,i_2}_{j_1,j_2}) + a_{i_1,j_2} \cdot \det(A^{i_2,i_3}_{j_1,j_2})
    \text{ and }$$
     $$a_{i_2,j_2} \cdot \det(A^{i_1,i_2}_{j_1,j_3})= a_{i_2,j_3} \cdot \det(A^{i_1,i_2}_{j_1,j_2}) + a_{i_2,j_1} \cdot \det(A^{i_1,i_2}_{j_2,j_3}).$$
     By our assumptions $a_{i_2,j_2} > 0$ and thus 
     induction on $i_3-i_1$ and $j_3-j_1$ yields that
     $A^{i_1,i_3}_{j_1,j_2}$ and $\det(A^{i_1,i_2}_{j_1,j_3})$ 
     are non-negative. 

     It follows that all $2$-minors are non-negative.
\end{proof}

\begin{lemma} \label{lem；product}
  Let $A \in \RR^{n \times r}$ and $B \in \RR^{r \times m}$. 
  If $A$ and $B$ are TP$_2$ then so is $AB$.
  \end{lemma}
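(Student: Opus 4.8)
The plan is to derive the statement from the Cauchy--Binet formula applied to $2\times 2$ minors. Since the TP$_2$ property has two ingredients — non-negativity of all entries and non-negativity of all $2\times 2$ minors — I would verify these separately.

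First I would check entries. For $1\le i\le n$ and $1\le j\le m$ the entry $(AB)_{ij}=\sum_{k=1}^{r}a_{ik}b_{kj}$ is a sum of products of non-negative reals, hence non-negative.

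Next I would handle the $2\times 2$ minors. Fix rows $1\le i_1<i_2\le n$ and columns $1\le j_1<j_2\le m$. Selecting rows $i_1,i_2$ of $AB$ and columns $j_1,j_2$ produces the product of the $2\times r$ matrix consisting of rows $i_1,i_2$ of $A$ with the $r\times 2$ matrix consisting of columns $j_1,j_2$ of $B$. The Cauchy--Binet formula then yields
$$\det\big(A^{i_1,i_2}_{\phantom{j}}B_{j_1,j_2}\big)=\sum_{1\le k_1<k_2\le r}\det\big(A^{i_1,i_2}_{k_1,k_2}\big)\cdot\det\big(B^{k_1,k_2}_{j_1,j_2}\big),$$
where the left-hand side is exactly the $2\times 2$ minor of $AB$ on rows $i_1,i_2$ and columns $j_1,j_2$. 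Each summand on the right is a product of a $2\times 2$ minor of $A$ and a $2\times 2$ minor of $B$; since $A$ and $B$ are TP$_2$, both factors are non-negative, so the sum is non-negative. Hence every $2\times 2$ minor of $AB$ is non-negative, and together with the entrywise non-negativity this shows $AB$ is TP$_2$.

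There is no genuine obstacle here; the only point requiring a little care is the bookkeeping identifying the submatrix of $AB$ on the chosen rows and columns with an honest matrix product of a $2\times r$ block of $A$ and an $r\times 2$ block of $B$, so that Cauchy--Binet applies verbatim. (Alternatively, one can establish the displayed identity by directly expanding the $2\times 2$ determinant $\det\big((AB)^{i_1,i_2}_{j_1,j_2}\big)=\sum_{k,\ell}a_{i_1 k}b_{k j_1}a_{i_2\ell}b_{\ell j_2}-\sum_{k,\ell}a_{i_1 k}b_{k j_2}a_{i_2\ell}b_{\ell j_1}$ and collecting the terms with $k<\ell$, $k>\ell$ and $k=\ell$; the diagonal terms cancel and the rest regroup into the claimed sum. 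This avoids citing Cauchy--Binet but is more tedious.)
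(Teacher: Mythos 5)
Your proof is correct and follows essentially the same route as the paper: apply the Cauchy--Binet formula to the $2\times 2$ submatrices of $AB$ and note that every summand is a product of non-negative minors of $A$ and $B$. Your extra check of entrywise non-negativity (the $1\times 1$ minors), which the paper leaves implicit, is a harmless and sensible addition.
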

  \begin{proof}
      Let $1 \leq i_1 < i_2 \leq n$ be row indices and $1 \leq j_1 < j_2 \leq m$ be
      column indices. Then by the  Cauchy-Binet formula we obtain 
      $$\det\big(\,(AB)^{i_1,i_2}_{j_1,j_2}\,\big) = \sum_{1 \leq k_1 < k_2 \leq r}
      \det(\,A^{i_1,i_2}_{k_1,k_2}\,) \cdot \det(\,B^{k_1,k_2}_{j_1,j_2}\,).$$
      By assumption all terms on the right hand side are non-negative. It follows
      that $AB$ is TP$_2$.
  \end{proof}
  
\section*{Acknowledgments}
We thank the referees for their detailed reports and their suggestions for improvements.
The first author was partially supported by the National Natural Science Foundation of China (Grant Nos. 12271222), Scientific Research
Foundation of Jiangsu Normal University (Grant Nos. 21XFRS019) and the China Scholarship Council.

 \section*{Conflict of Interest}
 The authors declare not conflict of interest.

\section*{Data Availability}
Not applicable.

\end{document}